\documentclass[10pt, a4paper, reqno]{amsart}
\usepackage{amscd}
\usepackage{amsmath}
\usepackage{amstext}
\usepackage{amsthm}
\usepackage{amssymb}
\usepackage{amsopn}
\usepackage{amssymb}
\usepackage{amsxtra}
\usepackage{amsfonts}
\usepackage{fancyhdr}
\usepackage{paralist, epsfig}
\usepackage[mathcal]{euscript}
\usepackage[english]{babel}
\usepackage[latin1]{inputenc}
\usepackage{mathptmx}

\usepackage{amssymb}
\usepackage{graphicx}
\usepackage{amsmath}
\usepackage{amsfonts}

\usepackage[pdftex]{color}

\oddsidemargin27mm
\evensidemargin27mm
\setlength{\textwidth}{148mm}
\setlength{\textheight}{235mm}
\setlength{\hoffset}{-21mm}
\setlength{\voffset}{-8mm}
\setlength{\footskip}{8mm}
\setlength{\parindent}{0mm}
\setlength{\parskip}{0.5ex}
\setlength{\headheight}{0mm}

\newtheoremstyle{normal}
{5pt}
{5pt}
{\normalfont}
{}
{\bfseries}
{}
{0.4em}
{\bfseries{\thmname{#1}\thmnumber{ #2}.\thmnote{ \hspace{0.5em}(#3)\newline}}}

\newtheoremstyle{kursiv}
{5pt}
{5pt}
{\itshape}
{}
{\bfseries}
{}
{0.4em}
{\bfseries{\thmname{#1}\thmnumber{ #2}.\thmnote{ \hspace{0.5em}(#3)\newline}}}

\theoremstyle{kursiv}
\newtheorem{theorem}{Theorem}
\newtheorem{lemma}[theorem]{Lemma}
\newtheorem{proposition}[theorem]{Proposition}
\newtheorem{corollary}[theorem]{Corollary}

\theoremstyle{normal}
\newtheorem{example}[theorem]{Example}
\newtheorem{remark}[theorem]{Remark}
\newtheorem{definition}[theorem]{Definition}

\newcommand{\supp}{\operatorname{supp}\nolimits}
\renewcommand{\epsilon}{\varepsilon}
\newcommand{\ind}[1]{\operatorname{ind}_{#1}\nolimits}
\newcommand{\proj}[1]{\operatorname{proj}_{#1}\nolimits}
\newcommand{\Projeins}{\ensuremath{\operatorname{Proj}^{\,\operatorname{1}}}}

\newcommand{\tensor}{\ensuremath{\otimes}}
\newcommand{\tensorepsilon}{\ensuremath{\,\otimes_{\epsilon}\,}}
\newcommand{\tensorcheck}{\ensuremath{\,\check{\otimes}_{\epsilon}\,}}
\newcommand{\tensorhat}{\ensuremath{\,\hat{\otimes}_{\pi}\,}}
\newcommand{\ep}{\ensuremath{\,\epsilon\,}}
\newcommand{\Bigcap}[1]{\ensuremath{\mathop{\cap}_{#1}}}
\newcommand{\Prod}[2]{\ensuremath{\underset{\scriptscriptstyle #1}{\overset{\scriptscriptstyle#2}{{\textstyle\prod\displaystyle}}}\,}}
\newcommand{\im}{\operatorname{im}\nolimits}

\setlength{\parskip}{\baselineskip}
\fancyhead{}
\fancyhead[RE]{}
\fancyhead[LO]{}
\fancyhead[RO,LE]{}
\fancyfoot[C]{$\scriptstyle\thepage$}

\pagestyle{fancy}
\setlength{\parskip}{0cm}

\begin{document}

\title{Weighted PLB-spaces of continuous functions arising as Tensor Products of a Fr\'{e}chet and a DF-space}

\author{Sven-Ake Wegner\,\MakeLowercase{$^{\text{a}}$}}

\renewcommand{\thefootnote}{}
\hspace{-1000pt}\footnote{\hspace{5.5pt}2010 \emph{Mathematics Subject Classification}: Primary 46E10; Secondary 46A13, 46M40, 46M05.}
\hspace{-1000pt}\footnote{\hspace{5.5pt}\emph{Key words and phrases}: PLB-space, weighted spaces of continuous functions, tensor product of a Fr\'{e}chet and a DF-space.\vspace{1.6pt}}
\hspace{-1000pt}\footnote{\hspace{5.5pt}\textit{Date:} October 10, 2010.\vspace{1.4pt}}
\hspace{-1000pt}\footnote{$^{\text{a}}$\,FB C -- Mathematics, University of Wuppertal, Gau\ss{}str.~20, 42119 Wuppertal, Germany, Phone: +49\hspace{1.2pt}202\hspace{1.2pt}439\hspace{1.2pt}253\hspace{1.2pt}1, Fax: +49\linebreak\phantom{x}\hspace{12.5pt}202\hspace{1.2pt}439\hspace{1.2pt}372\hspace{1.2pt}4, eMail: wegner@math.uni-wuppertal.de.}

\begin{abstract}
Countable projective limits of countable inductive limits, so-called PLB-spaces, of weighted Banach spaces of continuous functions have recently been investigated by Agethen, Bierstedt and Bonet, who analyzed locally convex properties in terms of the defining double sequence of weights. We complement their results by considering a defining sequence which is the product of two single sequences. By associating these two sequences with a weighted Fr\'{e}chet, resp.~LB-space of continuous functions or with two weighted Fr\'{e}chet spaces (by taking the reciprocal of one of the sequences) we derive a representation of the PLB-space as the tensor product of a Fr\'{e}chet and a DF-space and exhibit a connection between the invariants (DN) and ($\upOmega$) for Fr\'{e}chet spaces and locally convex properties of the PLB-space resp.~of the forementioned tensor product.
\end{abstract}

\maketitle

\section{Introduction}

Agethen, Bierstedt, Bonet \cite{ABB2009} investigated the structure of spaces of continuous functions defined on a locally compact and $\sigma$-compact topological space that can be written as a countable intersection of countable unions of weighted Banach spaces of continuous functions. These spaces were introduced and studied for the first time in Agethen's thesis \cite{Agethen2004}; they are examples of so-called PLB-spaces, i.e.~countable projective limits of countable inductive limits of Banach spaces. Spaces of this type arise naturally in analysis, for instance the space of distributions, the space of real analytic functions and several spaces of ultradistributions are of this type. In fact, all the forementioned spaces are even PLS-spaces, that is the linking maps in the inductive spectra of Banach spaces are compact and some of them even appear to be PLN-spaces (i.e.~the linking maps are nuclear). During the last years the theory of PLS-spaces has played an important role in the application of 
abstract functional analytic methods to several classical problems in analysis. In particular there was an intense research on tensor products of PLS-spaces motivated by the problem of parameter dependence of solutions of PDE, see e.g.~Bonet, Doma\'{n}ski \cite{BoDo2006,BoDo2008}, Doma\'{n}ski \cite{Domanski2010} and Piszczek \cite{Piszczek2010}. We refer to the survey article \cite{Domanski2004} of Doma\'{n}ski for applications, examples and further references.
\smallskip
\\Many of the applications reviewed by Doma\'{n}ski \cite{Domanski2004} are based on the theory of the so-called first derived functor of the projective limit functor. This method has its origin in the application of homological algebra to functional analysis. The research on this subject was started by Palamodov \cite{Palamodov1968,Palamodov1971} in the late sixties and carried on since the mid eighties by Vogt \cite{VogtLectures} and many others. We refer to the book of Wengenroth \cite{Wengenroth}, who laid down a systematic study of homological tools in functional analysis and in particular presents many ready-for-use results concerning concrete analytic problems. In particular, \cite[section 5]{Wengenroth} illustrates that for the splitting theory of Fr\'{e}chet or more general locally convex spaces, the consideration of PLB-spaces which are not PLS-spaces is indispensable.
\smallskip
\\Agethen, Bierstedt, Bonet \cite{ABB2009} described locally convex properties (i.e.~ultrabornologicity and barrelledness) of weighted PLB-spaces of continuous functions in terms of the defining double sequence of weights by using the theory of the derived projective limit functor $\Projeins$, which we mentioned already above. In addition they studied the interchangeability of projective and inductive limit, i.e.~the question when the PLB-spaces are equal to the weighted LF-spaces of continuous functions introduced by Bierstedt, Bonet \cite{BB1994}. Moreover, the work of Agethen, Bierstedt, Bonet exhibits that certain spaces of linear and continuous operators between K\"othe echelon spaces as well as certain tensor products of a K\"othe echelon and a coechelon space happen to be weighted PLB-spaces of continuous functions, see \cite[section 4]{ABB2009}.
\smallskip
\\In this paper we complement the results of \cite{ABB2009} by considering the following special situation. We assume that the domain of the functions in the PLB-space is the product of two topological spaces and that the defining double sequence of weights is the product of an increasing and a decreasing (single) sequence on each of the two topological spaces. By taking the reciprocal of the decreasing sequence we can associate the double sequence with two weighted Fr\'{e}chet spaces of continuous functions. This enables us to exhibit a connection between locally convex properties of the PLB-space and the invariants (DN) and ($\upOmega$) for Fr\'{e}chet spaces. The latter were introduced by Vogt \cite{Vogt1977} and Vogt, Wagner \cite{VogtWagner1980} and play a prominent role in the structure theory of Fr\'{e}chet spaces, see for instance the book \cite{MeiseVogtEnglisch} of Meise, Vogt. In particular we establish a criterion for ultrabornologicity of the PLB-spaces formulated in terms of (DN) and ($\upOmega$). Taking no reciprocals the two sequences give rise to a weighted Fr\'{e}chet and a weighted LB-space of continuous functions. In analogy to the case of sequence spaces (cf.~\cite[section 4]{ABB2009}) we show that the $\epsilon$-tensor product of these two spaces is isomorphic to the PLB-space defined by the double sequence explained above if we assume the decreasing sequence to be regularly decreasing (see Bierstedt, Meise, Summers \cite[Definition 2.1]{BMS1982}). Combining these results we finally obtain a criterion for the ultrabornologicity of the tensor product of a weighted Fr\'{e}chet space of continuous functions and a weighted LB-space of continuous functions.
\smallskip
\\The underlying general question of determining topological properties of the tensor product of a Fr\'{e}chet space and a DF-space was raised by Grothendieck in the last section of his th\`{e}se \cite{Grothendieck1955}. He investigated the case of $\pi$-tensor products of echelon and coechelon spaces of order one, see \cite[Chapitre II, \textsection 4, No. 3, Theorem 15]{Grothendieck1955}. His results inspired many authors to further studies; see Varol \cite[Section 0]{Varol2007} for references and a generalization of Grothendieck's original result to the case of $\pi$-tensor products of a K\"othe coechelon space of order one and arbitrary Fr\'{e}chet spaces (\cite[Theorem 2.1]{Varol2007}). Varol \cite[Theorem 2.7]{Varol2007} investigated in addition the case of $\epsilon$-tensor products of K\"othe coechelon spaces of order zero and arbitrary Fr\'{e}chet spaces. Note that also the classical result \cite[Chapitre II, \textsection 4, No. 3, Corollaire 2]{Grothendieck1955} (see also Bonet, P\'{e}rez Carreras \cite[Proposition 11.6.13]{BPC}) of Grothendieck on the ultrabornologicity of $s'\tensorhat{}s$ is a result on an $\epsilon$-tensor product of a Fr\'{e}chet space and a DF-space due to the nuclearity of $s$. Indeed, a certain nuclearity assumption is also the key point in results of Piszczek \cite{Piszczek2010} (cf.~also Doma\'{n}ski \cite{Domanski2010}) on tensor products of PLS-spaces. As we explain at the end of Section 4, the latter results can be used in our situation; in fact \cite[Theorem 6 and Theorem 9]{Piszczek2010} provide criteria for tensor products of FS- and LS-spaces to be ultrabornological if at least one of them is nuclear.
\smallskip
\\We refer the reader to \cite{BMS1982} for weighted spaces of continuous functions and to \cite{Jarchow,KoetheI,KoetheII,MeiseVogtEnglisch,BPC} for the general theory of locally convex spaces.

\section{Notation and Preliminary Results}\label{Notation}

Let $X$ be a locally compact and $\sigma$-compact topological space. By $C(X)$ we denote the space of all continuous functions on $X$ and by $C_c(X)$ the subspace of all functions with compact support. A weight on $X$ is a strictly positive and continuous function on $X$. We consider a double sequence $\mathcal{A}=((a_{N,n})_{n\in\mathbb{N}})_{N\in\mathbb{N}}$ of weights on $X$ which is decreasing in $n$ and increasing in $N$, i.e.~$a_{N,n+1}\leqslant a_{N,n}\leqslant a_{N+1,n}$ holds for all $N$ and $n$. This condition will be assumed on the double sequence $\mathcal{A}$ in the rest of this article. We define
$$
C(a_{N,n})_0(X):=\{\,f\in C(X)\: ; \: a_{N,n}|f| \text{ vanishes at } \infty \text{ on } X\,\}.
$$
Recall that a function $g\colon X\rightarrow \mathbb{R}$ is said to vanish at infinity on $X$ if for each $\epsilon>0$ there is a compact set $K$ in $X$ such that $|g(x)|<\epsilon$ for all $x\in X\backslash K$. The spaces $C(a_{N,n})_0(X)$ are Banach spaces for the norms $\|f\|_{N,n}:=\sup_{x\in X}a_{N,n}(x)|f(x)|$, $f\in C(a_{N,n})_0(X)$. By the definition of these norms, $C(a_{N,n})_0(X)\subseteq C(a_{N,n+1})_0(X)$ holds with continuous inclusion for all $N$ and $n$ and we can define for each $N$ the weighted inductive limit
$$
(\mathcal{A}_N)_0C(X):=\ind{n}C(a_{N,n})_0(X).
$$
The latter space needs not to be regular. By Bierstedt, Meise, Summers \cite[Theorem 2.6]{BMS1982} it is regular if and only if it is complete and this is in turn equivalent to the fact that the sequence $\mathcal{A}_N:=(a_{N,n})_{n\in\mathbb{N}}$ is regularly decreasing, see \cite[Definition 2.1]{BMS1982}. We refer to the latter article for more details on this condition. For each $N$ we have $(\mathcal{A}_{N+1})_0C(X)\subseteq(\mathcal{A}_N)_0C(X)$ with continuous inclusion. Hence, $\mathcal{A}_0C:=((\mathcal{A}_N)_0C(X))_{N\in\mathbb{N}}$ is a projective spectrum of LB-spaces with inclusions as linking maps and we can form the following projective limit, called weighted PLB-space of continuous functions
$$
(AC)_0(X):=\proj{N}(\mathcal{A}_N)_0C(X),
$$
which is the object of our study in this article. Since the space of continuous functions with compact support is contained in $(\mathcal{A}_N)_0C(X)$ and since it is dense in each Banach space $C(a_{N,n})_0(X)$, it follows that the projective limit $(AC)_0(X)$, or rather the projective spectrum $\mathcal{A}_0C$, is strongly reduced in the sense of \cite[Definition 3.3.5]{Wengenroth}. In fact it is a reduced projective limit in the sense of K\"othe \cite[p.~120]{KoetheI}, which is a stronger condition.
\medskip
\\We refer the reader to the book of Wengenroth \cite{Wengenroth} for a detailed exposition of the theory of projective spectra of locally convex spaces $\mathcal{X}=(X_N)_{N\in\mathbb{N}}$, their projective limits $\proj{N}X_N$, the derived functor $\Projeins\mathcal{X}$ and for conditions to ensure that the derived functor of a projective spectrum vanishes, i.e.~that we have $\Projeins\mathcal{X}=0$, including important results by Palamodov \cite{Palamodov1968,Palamodov1971}, Retakh \cite{Retakh1970}, Braun, Vogt \cite{BraunVogt1997}, Vogt \cite{VogtLectures,Vogt1989}, Frerick, Wengenroth \cite{FrerickWengenroth1996} and many others. At this point we only mention that, if $\mathcal{X}=(X_N)_{N\in\mathbb{N}}$ is a projective spectrum of locally convex spaces with inclusions as linking maps and limit $X=\proj{N}X_N$, the so-called fundamental resolution
$$
0\rightarrow X\rightarrow \Prod{N=1}{\infty}X_N\mathop{\rightarrow}^{\sigma}\Prod{N=1}{\infty}X_N,
$$
where $\sigma((x_N)_{N\in\mathbb{N}}):=(x_{N+1}-x_N)$, is exact but $\sigma$ is not necessarily surjective, what directs to the definition
$$
\Projeins\mathcal{X}:=\big(\Prod{N=1}{\infty}X_N\big)\big/\im\sigma.
$$
For more details see Wengenroth \cite[Chapter 3]{Wengenroth}.
\medskip
\\In \cite{Vogt1992} Vogt introduced the condition (wQ). In the case of weighted PLB-spaces one can reformulate this condition in terms of the weights as follows. We say that the sequence $\mathcal{A}$ satisfies (wQ) if
$$
\textstyle\forall \: N \; \exists \: M \geqslant N,\, n \; \forall \: K \geqslant M,\, m \; \exists \: k, \, S>0 : \frac{1}{a_{M,m}} \leqslant \max\big(\frac{S}{a_{N,n}},\frac{S}{a_{K,k}}\big).
$$
We have the following result concerning homological properties of the spectrum $\mathcal{A}_0C$ and locally convex properties of $(AC)_0(X)$, which is due to Agethen, Bierstedt, Bonet \cite{ABB2009}.

\begin{theorem}\label{thm1} (\cite[Theorem 3.7]{ABB2009}) The following are equivalent.
\\\begin{tabular}{llll}
(i) & $\Projeins \mathcal{A}_0C=0$. & (iii) & $(AC)_0(X)$ is barrelled.\\
(ii)& $(AC)_0(X)$ is ultrabornological. & (iv) & $\mathcal{A}$ satisfies condition (wQ).\hspace{104.7pt}\phantom{a}\qed
\end{tabular}
\end{theorem}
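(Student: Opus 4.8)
The plan is to establish the cyclic chain of implications (ii) $\Rightarrow$ (iii) $\Rightarrow$ (iv) $\Rightarrow$ (i) $\Rightarrow$ (ii), which splits the argument into two homological steps that are either trivial or follow from the general theory and two concrete steps carrying the analytic content in terms of the weights. Running the equivalences as a cycle has the advantage that the delicate coincidence of barrelledness and ultrabornologicity for $(AC)_0(X)$ never has to be proved directly; it drops out once the loop is closed.

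The step (ii) $\Rightarrow$ (iii) is immediate, as every ultrabornological space is barrelled. For (i) $\Rightarrow$ (ii) I would not argue by hand but invoke the general theory of the derived functor recalled in Section~\ref{Notation}. The spectrum $\mathcal{A}_0C$ is strongly reduced and each step $(\mathcal{A}_N)_0C(X)$ is an LB-space, hence ultrabornological and webbed; therefore the vanishing $\Projeins\mathcal{A}_0C=0$, equivalently the surjectivity of the connecting map $\sigma$ in the fundamental resolution, forces the projective limit $(AC)_0(X)$ to be ultrabornological by the corresponding results in Wengenroth's book. This disposes of the homological half of the cycle and leaves the two concrete implications.

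For (iv) $\Rightarrow$ (i) I would prove that (wQ) implies surjectivity of $\sigma\colon\prod_N X_N\to\prod_N X_N$, abbreviating $X_N:=(\mathcal{A}_N)_0C(X)$. Given a datum $(y_N)_N$ one has to solve $x_{N+1}-x_N=y_N$ with $x_N\in X_N$ for every $N$; naive telescoping produces partial sums that fail to lie in the required finer step, so a Mittag-Leffler type construction by successive corrections is needed. The role of (wQ) is precisely to let me split a function controlled in the $M$-th step into a summand controlled in the coarser $N$-th step and a summand controlled in the finer $K$-th step, which is exactly the interpolation that makes the corrected series converge simultaneously in every Banach space $C(a_{N,n})_0(X)$. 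Converting the pointwise $\max$-estimate of (wQ) into such a decomposition of continuous functions, while keeping track of the inductive-limit structure inside each $X_N$, is the \emph{main obstacle} of the whole proof.

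Finally, for (iii) $\Rightarrow$ (iv) I would argue by contraposition. Writing out the negation of (wQ) fixes an index $N$ for which, however $M$ and $n$ are chosen, there are $K$, $m$ and points of $X$ at which $1/a_{M,m}$ cannot be dominated by any multiple of $\max(1/a_{N,n},1/a_{K,k})$. Exploiting the strong reducedness of the spectrum, which yields a clean description of the dual of $(AC)_0(X)$ and of its equicontinuous subsets in terms of the weights, I would assemble these points into a family of functionals that is bounded in the weak dual topology but not equicontinuous, so that its polar is a barrel which is not a neighbourhood of zero and barrelledness fails. Once the negation of (wQ) has been written out this assembling is routine, whereas the convergence argument in (iv) $\Rightarrow$ (i) is where the real difficulty lies.
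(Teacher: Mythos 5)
First, note that the paper does not prove this theorem at all: it is quoted verbatim from Agethen--Bierstedt--Bonet \cite[Theorem 3.7]{ABB2009} and closed with a qed-box, so there is no internal argument to compare yours against; the comparison below is therefore with the proof in that reference. Your cyclic architecture (ii)$\Rightarrow$(iii)$\Rightarrow$(iv)$\Rightarrow$(i)$\Rightarrow$(ii) is essentially the one used there, and the two soft arrows are in order: (ii)$\Rightarrow$(iii) is trivial, and (i)$\Rightarrow$(ii) is indeed the Vogt/Wengenroth theorem that $\Projeins\mathcal{X}=0$ forces the limit of a \emph{strongly reduced} spectrum of LB-spaces to be ultrabornological; you should say explicitly that strong reducedness is what licenses this and that it holds here because $C_c(X)$ is dense in every step $C(a_{N,n})_0(X)$, as recorded in Section 2 (the ``webbed'' remark is beside the point).

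The genuine gap is in the two arrows you yourself flag as carrying the analytic content. For (iv)$\Rightarrow$(i) you propose to prove surjectivity of $\sigma$ directly by a Mittag--Leffler scheme; this amounts to re-proving the general vanishing theorems, and the part you have not supplied --- convergence of the corrected telescoping series inside a \emph{fixed Banach step} of each LB-space, together with the bookkeeping of the runaway indices $n,m,k$ --- is exactly where such a bare-hands argument becomes delicate. The route actually taken is to verify the abstract sufficient condition of Braun--Vogt/Frerick--Wengenroth (an inclusion of unit balls of the form $B_{M,m}\subseteq S\,(B_{N,n}+B_{K,k})$, condition (P$_3^*$)), which follows from the pointwise $\max$-estimate in (wQ) via the decomposition lemma \cite[Lemma 3.4]{ABB2009} --- the very lemma this paper invokes in the proof of Lemma \ref{Frechet characterization lemma 2}. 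Naming that lemma is what closes your acknowledged ``main obstacle''; without it, the step from a $\max$-bound on $|f|$ to a splitting $f=\varphi_1+\varphi_2$ with each summand controlled by one of the two weights is an unproved claim. For (iii)$\Rightarrow$(iv) the contrapositive via weighted point evaluations is the right idea and is how \cite{ABB2009} argue, but calling the assembly ``routine'' undersells the quantifier bookkeeping in the negation of (wQ): the exceptional points depend on $k$ and $S$, and one must extract from all of them a single $\sigma(E',E)$-bounded, non-equicontinuous family (equivalently a barrel that is not a neighbourhood of zero). As it stands the proposal is a correct skeleton whose two substantive implications remain open.
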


\noindent{}Given a sequence of weights $\mathcal{A}=((a_{N,n})_{n\in\mathbb{N}})_{N\in\mathbb{N}}$ we can also associate a weighted LF-space of continuous functions defined by
$$
\mathcal{V}_0C(X):=\ind{n}\proj{N}C(a_{N,n})_0(X).
$$
Spaces of this type were investigated by Bierstedt, Bonet \cite{BB1994}. It is clear, that $\mathcal{V}_0C(X)\subseteq (AC)_0(X)$ holds with continuous inclusion. The equality was investigated by Agethen, Bierstedt, Bonet \cite{ABB2009} using the following condition introduced by Vogt \cite[Satz 1.1]{Vogt1983}. We say that the sequence $\mathcal{A}$ satisfies condition (B) if
$$
\forall \: (n(N))_{N\in\mathbb{N}}\subseteq \mathbb{N} \; \exists \: m \; \forall \: M \; \exists \: L,\, c>0 \colon a_{M,m} \, \leqslant \, c \max_{\scriptscriptstyle N=1,\dots,L} a_{N,n(N)}.
$$
Agethen, Bierstedt, Bonet \cite{ABB2009} proved the following result on the forementioned equality of PLB- and LF-space.

\begin{theorem}\label{thmC}(\cite[Theorem 3.10 and Corollary 3.11]{ABB2009}) The following statements hold.
\begin{itemize}
\item[(1)] If $\mathcal{A}$ satisfies condition (B), then $(AC)_0(X)=\mathcal{V}_0C(X)$ holds algebraically. If, in addition, each $(\mathcal{A}_N)_0C(X)$ is complete, then the converse also holds.\vspace{1pt}
\item[(2)] Assume that each $(\mathcal{A}_N)_0C(X)$ is complete. Then $(AC)_0(X)=\mathcal{V}_0C(X)$ holds algebraical\-ly and topologically if and only if $\mathcal{A}$ satisfies conditions (B) and (wQ).
\hfill\qed\end{itemize}
\end{theorem}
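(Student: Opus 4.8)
The plan is to settle the algebraic equality first and then bootstrap to the topological one. Throughout I use that, as sets, $(AC)_0(X)=\bigcap_N(\mathcal{A}_N)_0C(X)$ and $\mathcal{V}_0C(X)=\bigcup_m\bigcap_NC(a_{N,m})_0(X)$, so that $f\in(AC)_0(X)$ amounts to the existence of a function $N\mapsto n(N)$ with $a_{N,n(N)}|f|$ vanishing at infinity for every $N$, whereas $f\in\mathcal{V}_0C(X)$ requires a \emph{single} index $m$ with $a_{N,m}|f|$ vanishing at infinity for all $N$ simultaneously.

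For the implication (B) $\Rightarrow$ algebraic equality in (1) I would argue directly: given $f\in(AC)_0(X)$, choose $n(N)$ as above, feed this sequence into (B) to obtain the distinguished index $m$, and for each $M$ use the resulting estimate $a_{M,m}\leqslant c\max_{N=1,\dots,L}a_{N,n(N)}$. Since a finite maximum of functions vanishing at infinity again vanishes at infinity, and since a nonnegative function dominated by such a maximum inherits this property, $a_{M,m}|f|$ vanishes at infinity for every $M$; hence $f$ lies in $\bigcap_NC(a_{N,m})_0(X)\subseteq\mathcal{V}_0C(X)$. This direction needs neither completeness nor (wQ).

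The converse in (1) I would prove contrapositively, and this is where I expect the main difficulty to sit. Negating (B) yields a sequence $(n(N))_N$ such that for every $m$ there is $M=M(m)$ with $\sup_{x}a_{M(m),m}(x)/\max_{N\leqslant L}a_{N,n(N)}(x)=\infty$ for all $L$. The goal is to manufacture $f\in(AC)_0(X)\setminus\mathcal{V}_0C(X)$ by placing continuous peaks at a sequence of points $x_j$. Interleaving the indices $m$ so that each occurs infinitely often, at stage $j$ (attached to $m_j$) I would select, using an exhaustion of $X$ by compact sets afforded by $\sigma$-compactness, a point $x_j$ outside the $j$-th compact set with $a_{M(m_j),m_j}(x_j)\geqslant j\max_{N\leqslant L_j}a_{N,n(N)}(x_j)$ for some $L_j\to\infty$, and set the peak height to $1/a_{M(m_j),m_j}(x_j)$, realized by a small bump function $\varphi_j$ supported in a neighborhood $U_j$ of $x_j$. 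Here completeness of the steps, equivalently regular decrease of each $\mathcal{A}_N$ by \cite[Theorem 2.6]{BMS1982}, is what I would invoke to guarantee that the failure of the domination can be witnessed by points escaping every compact set, so that $a_{M(m),m}|f|$ does not vanish at infinity for any $m$, giving $f\notin\mathcal{V}_0C(X)$. On the other hand, for fixed $N$ the peak estimates force $a_{N,n(N)}|f|$ to be bounded and to vanish at infinity, whence $f\in C(a_{N,n(N)})_0(X)\subseteq(\mathcal{A}_N)_0C(X)$ and thus $f\in(AC)_0(X)$. Making the bumps disjoint and shrinking guarantees local finiteness and continuity of $f$, and the continuity of the weights converts the pointwise peak estimates into estimates valid on each $U_j$.

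For (2) I would combine (1) with the structural results already available. The inclusion $\mathcal{V}_0C(X)\subseteq(AC)_0(X)$ is continuous, and $\mathcal{V}_0C(X)$, being a countable inductive limit of the Fr\'{e}chet steps $\proj{N}C(a_{N,n})_0(X)$, is always ultrabornological. If (B) and (wQ) hold, then (B) gives the algebraic equality and (wQ) makes $(AC)_0(X)$ ultrabornological by Theorem \ref{thm1}; since $(AC)_0(X)$ is ultrabornological and $\mathcal{V}_0C(X)$ is webbed (being an LF-space), the inverse identity $(AC)_0(X)\to\mathcal{V}_0C(X)$ has closed graph and is therefore continuous by De Wilde's closed graph theorem, so the two topologies coincide. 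Conversely, a topological equality forces $(AC)_0(X)$ to be ultrabornological (hence (wQ) via Theorem \ref{thm1}) and in particular algebraically equal to $\mathcal{V}_0C(X)$, whence (B) follows from the converse half of (1); this last step is exactly where the completeness hypothesis is consumed.
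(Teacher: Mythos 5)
First, a point of comparison you could not have known: the paper does not prove this statement at all. Theorem \ref{thmC} is quoted verbatim from \cite[Theorem 3.10 and Corollary 3.11]{ABB2009} and closed with an end-of-proof mark, so there is no internal argument to measure yours against; what follows assesses your reconstruction on its own terms. The easy half of (1) is correct and complete: dominating $a_{M,m}|f|$ by a finite maximum of the $a_{N,n(N)}|f|$ and noting that a finite maximum of functions vanishing at infinity again vanishes at infinity is exactly right, and indeed needs neither completeness nor (wQ). Part (2) is also sound: $\mathcal{V}_0C(X)=\ind{n}\proj{N}C(a_{N,n})_0(X)$ is a Hausdorff countable inductive limit of Fr\'{e}chet spaces, hence ultrabornological and webbed; the graph of the identity $(AC)_0(X)\rightarrow\mathcal{V}_0C(X)$ is closed because its inverse is continuous between Hausdorff spaces; and De Wilde's closed graph theorem together with Theorem \ref{thm1} gives both implications, with the completeness hypothesis consumed exactly where you say, namely in the appeal to the converse half of (1).

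The genuine weak point is that converse. The peak construction you sketch is the right kind of argument and can very likely be completed as you indicate (shrink each $U_j$ so that the finitely many relevant weights vary by at most a factor of $2$ on it, keep the supports locally finite via a compact exhaustion). But your account of where completeness enters is not correct. You invoke it \emph{to guarantee that the failure of the domination can be witnessed by points escaping every compact set}; that is automatic: on a compact set the quotient $a_{M,m}/\max_{N\leqslant L}a_{N,n(N)}$ of strictly positive continuous functions is bounded, so enlarging the constant $c$ in the negated estimate beyond that bound forces the witnessing point outside the compact set. No regular decrease is needed there. As written, your construction therefore appears to establish the converse of (1) with no completeness assumption at all, which is strictly stronger than the cited theorem. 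Either that is actually true and the hypothesis in \cite{ABB2009} is an artifact of their method, or there is a hidden use of regular decrease in a step you have glossed over; in either case a proof that cannot say what one of its stated hypotheses is for is not yet finished. You should either locate the use of completeness precisely or explicitly claim the stronger statement and check your construction against the argument given for \cite[Theorem 3.10]{ABB2009}.
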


\noindent{}In the sequel we complement the above by considering the special situation that the domain $X$ is the product of two topological spaces $X_1$ and $X_2$ and the sequence $\mathcal{A}$ is the product of an increasing sequence $(a^1_N)_{N\in\mathbb{N}}$ defined on $X_1$ and a decreasing sequence $((a^2_n)^{-1})_{n\in\mathbb{N}}$ defined on $X_2$. In this special setting we can associate a weighted Fr\'{e}chet space to each of the sequences $(a^1_N)_{N\in\mathbb{N}}$ and $(a^2_n)_{n\in\mathbb{N}}$ and thus draw the line between properties of the PLB-space and the invariants (DN) and ($\upOmega$) for Fr\'{e}chet spaces. In order to do this we have to characterize (DN) and ($\upOmega$) in terms of the weights.

\section{(DN) and ($\upOmega$) vs.~(wQ)}\label{Supplementary results: (DN) and (Omega) vs. (wQ)}

Vogt \cite{Vogt1977} and Vogt, Wagner \cite{VogtWagner1980} introduced the following conditions, which are topological invariants of Fr\'{e}chet spaces.  We say that a Fr\'{e}chet space $E$ with a fundamental sequence of seminorms $(\|\cdot\|_{n})_{n\in\mathbb{N}}$ satisfies condition (DN), if
$$
\textstyle\exists\: n\;\forall\:m\geqslant n,\,0<\theta<1\;\exists\: k\geqslant m,\,C>0\;\forall\:x\in E\,\colon \|x\|_m\leqslant C\|x\|_k^{\theta} \|x\|_n^{1-\theta}.
$$
By Meise, Vogt \cite[Lemma 29.10]{MeiseVogtEnglisch} the latter statement is equivalent to the original formulation \cite[Definition on p.~359]{MeiseVogtEnglisch} of (DN). According to \cite[Definition on p.~367]{MeiseVogtEnglisch} we say that a Fr\'{e}chet space $E$ with a fundamental sequence of seminorms $(\|\cdot\|_{n})_{n\in\mathbb{N}}$  satisfies condition ($\upOmega$), if
$$
\forall\: N\;\exists\: M\geqslant N\;\forall\: K\geqslant M\;\exists\: D>0,\,0<\theta<1\;\forall\: y\in E'\,\colon \|y\|_M^{\star}\leqslant D(\|y\|_K^{\star})^{\theta}(\|y\|_N^{\star})^{1-\theta},
$$
where $\|y\|_n^{\star}:=\sup_{\|x\|_n\leqslant1}|y(x)|$. It is well-known (and an immediate consequence of \cite[Lemma 29.13]{MeiseVogtEnglisch}) that replacing the above estimate by the inclusion $U_M\subseteq r\,U_N+D\,r^{1-1/\theta}\,U_K$ (required for each $r>0$ and with $U_n:=\{x\in E\:;\:\|x\|_n\leqslant1\}$ for $n\in\mathbb{N}$) yields an equivalent formulation of ($\upOmega$).

\begin{definition}\label{the weighted conditions} Let $A=(a_n)_{n\in\mathbb{N}}$ be an increasing sequence of weights on a Hausdorff locally compact and $\sigma$-compact topological space $X$. We say that $A$ satisfies condition $\text{(DN)}_{\text{w}}$ if
$$
\textstyle\exists\: n\;\forall\:m\geqslant n,\,0<\theta<1\;\exists\: k\geqslant m,\,C>0\,\colon a_m\leqslant C a_k^{\theta}a_n^{1-\theta}.
$$
We say that $A$ satisfies condition $\text{(}\upOmega\text{)}_{\text{w}}$ if
$$
\textstyle\forall\: N\;\exists\: M\geqslant N\;\forall\: K\geqslant M\;\exists\: D>0,\,0<\theta<1\,\colon \frac{1}{a_M}\leqslant D\big(\frac{1}{a_K}\big)^{\theta}\big(\frac{1}{a_N}\big)^{1-\theta}.
$$
\end{definition}

\begin{lemma}\label{Frechet characterization lemma 1} Let $A=(a_n)_{n\in\mathbb{N}}$ be an increasing sequence of weights on a Hausdorff locally compact and $\sigma$-compact topological space $X$. The Fr\'{e}chet space $CA_0(X)=\proj{n}C(a_n)_{0}(X)$ satisfies condition (DN) if and only if the sequence $A$ satisfies $\text{(DN)}_{\text{w}}$.
\end{lemma}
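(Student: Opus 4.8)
The plan is to prove both implications by relating the functional inequality defining (DN) to the pointwise inequality defining $\text{(DN)}_{\text{w}}$, exploiting the fact that the two conditions share exactly the same quantifier prefix $\exists\, n\;\forall\, m,\theta\;\exists\, k,C$. It therefore suffices to transfer only the inner estimate in each direction, keeping the indices $n,m,k$ and the exponent $\theta$ fixed throughout. Recall that the fundamental increasing sequence of seminorms on $CA_0(X)=\proj{n}C(a_n)_0(X)$ is $\|f\|_n=\sup_{x\in X}a_n(x)|f(x)|$.

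For the implication $\text{(DN)}_{\text{w}}\Rightarrow\text{(DN)}$ I would multiply the pointwise estimate $a_m\leqslant C\,a_k^{\theta}a_n^{1-\theta}$ by $|f(x)|=|f(x)|^{\theta}|f(x)|^{1-\theta}$ and rewrite the right-hand side as $C\,(a_k(x)|f(x)|)^{\theta}(a_n(x)|f(x)|)^{1-\theta}$. Bounding each factor by the corresponding seminorm and taking the supremum over $x\in X$ yields $\|f\|_m\leqslant C\|f\|_k^{\theta}\|f\|_n^{1-\theta}$ for every $f\in CA_0(X)$, which is precisely (DN). No properties of $X$ beyond the definition of the norms enter here.

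The converse $\text{(DN)}\Rightarrow\text{(DN)}_{\text{w}}$ is the substantial direction, and the key idea is a localisation argument: to recover the pointwise inequality at an arbitrary point $x_0\in X$ one tests (DN) against continuous functions concentrated near $x_0$. Fix $x_0$ and $\epsilon>0$. Since only the three continuous weights $a_n,a_m,a_k$ are involved, I can choose an open neighbourhood $U$ of $x_0$ on which each of them differs from its value at $x_0$ by less than $\epsilon$. Because $X$ is locally compact and Hausdorff, Urysohn's lemma provides $f\in C_c(X)$ with $\supp f\subseteq U$, $0\leqslant f\leqslant1$ and $f(x_0)=1$; as $f$ has compact support, $a_j|f|$ vanishes at infinity for every $j$, so $f\in CA_0(X)$ and (DN) applies. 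Then $\|f\|_m\geqslant a_m(x_0)$, while $\|f\|_k\leqslant a_k(x_0)+\epsilon$ and $\|f\|_n\leqslant a_n(x_0)+\epsilon$, whence $a_m(x_0)\leqslant C(a_k(x_0)+\epsilon)^{\theta}(a_n(x_0)+\epsilon)^{1-\theta}$. Letting $\epsilon\to0$ gives $a_m(x_0)\leqslant C\,a_k(x_0)^{\theta}a_n(x_0)^{1-\theta}$, and since $x_0$ was arbitrary this is $\text{(DN)}_{\text{w}}$ with the same constant.

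The main obstacle is conceptual rather than computational: it lies in the converse, where one must ensure that testing against compactly supported bump functions genuinely isolates the value of the weights at a single point. The continuity of the weights and the local compactness of the Hausdorff space $X$ are exactly the hypotheses that make this concentration argument work, and they are recorded in the statement; once the test functions are in place the remaining estimates are routine.
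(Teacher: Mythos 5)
Your proof is correct and follows essentially the same route as the paper: the forward implication transfers the pointwise estimate through the sup-norms in exactly the same way, and your converse is the same localisation-by-bump-functions argument, merely phrased with a single Urysohn function supported in an $\epsilon$-neighbourhood where the three weights are nearly constant, instead of the paper's net of bump functions indexed by a compact neighbourhood filter of $x_0$. Both versions isolate the values $a_n(x_0),a_m(x_0),a_k(x_0)$ in the limit, so nothing substantive differs.
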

\begin{proof}\textquotedblleft{}$\Leftarrow$\textquotedblright{} We choose $n$ as in $\text{(DN)}_{\text{w}}$. For given $m\geqslant n$, $0<\theta<1$ we choose $k\geqslant m$ and $C>0$ as in $\text{(DN)}_{\text{w}}$. For an arbitrary $f\in CA_0(X)$ we obtain $\|f\|_m=\sup_{x\in X} a_m(x)|f(x)|\leqslant C\sup_{x\in X}a_k^{\theta}(x)a_n^{1-\theta}(x)|f(x)|^{\theta+1-\theta} \leqslant C\sup_{x\in X}(a_k(x)|f(x)|)^{\theta}\sup_{x\in X}(a_n(x)|f(x)|)^{1-\theta}=C\|f\|_k^{\theta}\|f\|_n^{1-\theta}$ and hence we have shown (DN).
\smallskip
\\\textquotedblleft{}$\Rightarrow$\textquotedblright{} Let $x_0\in X$ be fixed. Since $X$ is locally compact, there is a neighborhood filter $(K_{\beta})_{\beta\in B}$ for $x_0$ consisting of compact sets. Since $X$ is Hausdorff, $\Bigcap{\beta\in B}K_{\beta}=\{x_0\}$. For each $K_{\beta}$ we choose a function $f_{\beta}\in C_c(X)$ with $f_{\beta}(x_0)=1$, $0\leqslant f_{\beta}\leqslant 1$ and $\supp f_{\beta}\subseteq K_{\beta}$. Thus we have $\Bigcap{\beta\in B}\supp f_{\beta}=\{x_0\}$. Now we consider the net $(\sup_{x\in X}a(x)|f_{\beta}(x)|^{\gamma})_{\beta\in B}\subseteq \mathbb{R}$ for some fixed weight $a$ and $\gamma>0$ and obtain
$\sup_{x\in X}a(x)|f_{\beta}(x)|^{\gamma}=\sup_{x\in\supp f_{\beta}}a(x)|f_{\beta}(x)|^{\gamma}\leqslant \sup_{x\in\supp f_{\beta}}a(x)\rightarrow a(x_0)$ and $\sup_{x\in X}a(x)|f_{\beta}(x)|^{\gamma}\geqslant a(x_0)|f_{\beta}(x_0)|^{\gamma}=a(x_0)$.
\smallskip
\\Now we select $n$ as in (DN). For given $m\geqslant n$ and $0<\theta<1$ we choose $k\geqslant m$ and $C>0$ as in (DN). Now we fix some $x_0\in X$ and consider the $f_{\beta}$ defined above. We have $\sup_{x\in X}a_m(x)|f_{\beta}(x)|\leqslant C\,\sup_{x\in X}a_k^{\theta}(x)|f_{\beta}(x)|^{\theta}$\linebreak{}$\sup_{x\in X}a^{1-\theta}_n(x)|f_{\beta}(x)|^{1-\theta}$ for each $\beta\in B$. Taking limits on each side yields the desired inequality, which finishes the proof since $x_0$ was arbitrary.
\end{proof}

\begin{lemma}\label{Frechet characterization lemma 2} Let $A=(a_n)_{n\in\mathbb{N}}$ be an increasing sequence of weights on a Hausdorff locally compact and $\sigma$-compact topological $X$. The Fr\'{e}chet space $CA_0(X)=\proj{n}C(a_n)_{0}(X)$ satisfies condition ($\Omega$) if and only if the sequence $A$ satisfies $\text{(}\Omega\text{)}_{\text{w}}$.
\end{lemma}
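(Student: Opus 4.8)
The condition $(\Omega)$ is phrased in terms of the dual norms $\|y\|_n^{\star}$, so the plan is to first describe the dual $E'=(CA_0(X))'$ and compute these norms explicitly, and then transport the two inequalities into one another, much as in the proof of Lemma \ref{Frechet characterization lemma 1}. The starting observation is that multiplication by $a_n$ is an isometric isomorphism from $C(a_n)_0(X)$ onto the space $C_0(X)$ of continuous functions vanishing at infinity, equipped with the sup-norm. Since $C_c(X)\subseteq CA_0(X)$ is dense in every $C(a_n)_0(X)$, a functional $y\in E'$ with $\|y\|_N^{\star}<\infty$ extends uniquely to $C(a_N)_0(X)$ and is there, by the Riesz representation theorem, given by a regular Borel measure $\mu$ via $y(f)=\int_X f\,d\mu$. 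Computing the weighted unit ball then yields
$$
\|y\|_n^{\star}=\int_X\tfrac{1}{a_n}\,d|\mu|
$$
for every $n$ (with the value $+\infty$ allowed). In particular each point evaluation $\delta_{x_0}$, $x_0\in X$, lies in $E'$ and satisfies $\|\delta_{x_0}\|_n^{\star}=1/a_n(x_0)$; this can also be checked by hand with the bump functions $f_{\beta}$ from the proof of Lemma \ref{Frechet characterization lemma 1}.

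For the implication \textquotedblleft{}$\Leftarrow$\textquotedblright{} I would fix $N$, choose $M$ as in $(\Omega)_{\text{w}}$, and for given $K\geqslant M$ take $D>0$ and $0<\theta<1$ as in $(\Omega)_{\text{w}}$. Let $y\in E'$. If $\|y\|_N^{\star}=\infty$ the asserted estimate is trivial, so assume $\|y\|_N^{\star}<\infty$ and represent $y$ by a measure $\mu$ as above; since $a_N\leqslant a_M\leqslant a_K$, all three weighted integrals are then finite. Integrating the pointwise inequality $\tfrac{1}{a_M}\leqslant D\big(\tfrac{1}{a_K}\big)^{\theta}\big(\tfrac{1}{a_N}\big)^{1-\theta}$ against $|\mu|$ and applying H\"older's inequality with the conjugate exponents $1/\theta$ and $1/(1-\theta)$ gives
$$
\|y\|_M^{\star}=\int_X\tfrac{1}{a_M}\,d|\mu|\leqslant D\int_X\big(\tfrac{1}{a_K}\big)^{\theta}\big(\tfrac{1}{a_N}\big)^{1-\theta}d|\mu|\leqslant D\,(\|y\|_K^{\star})^{\theta}(\|y\|_N^{\star})^{1-\theta},
$$
which is precisely $(\Omega)$ with the same $M$, $D$ and $\theta$.

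For \textquotedblleft{}$\Rightarrow$\textquotedblright{} I would simply test $(\Omega)$ on the point evaluations. Given $N$, pick $M$ as in $(\Omega)$; for $K\geqslant M$ take the resulting $D$ and $\theta$. Applying the $(\Omega)$-inequality to $y=\delta_{x_0}$ and inserting $\|\delta_{x_0}\|_n^{\star}=1/a_n(x_0)$ yields $\tfrac{1}{a_M(x_0)}\leqslant D(\tfrac{1}{a_K(x_0)})^{\theta}(\tfrac{1}{a_N(x_0)})^{1-\theta}$. As the quantifier blocks of $(\Omega)$ and $(\Omega)_{\text{w}}$ coincide and $x_0\in X$ was arbitrary, this is exactly $(\Omega)_{\text{w}}$.

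The routine direction is \textquotedblleft{}$\Rightarrow$\textquotedblright{}; the substance sits in \textquotedblleft{}$\Leftarrow$\textquotedblright{}. I expect the main obstacle to be the clean identification of the dual and of the dual norms, that is, justifying the passage to the representing measure and the formula $\|y\|_n^{\star}=\int_X 1/a_n\,d|\mu|$, together with the bookkeeping for functionals whose lower dual norms are infinite. Once that is in place, the H\"older estimate is the only genuine computation, and it mirrors the elementary sup-norm estimate used for (DN) in Lemma \ref{Frechet characterization lemma 1}, with the integral against $|\mu|$ playing the role of the supremum over $X$.
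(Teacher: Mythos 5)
Your argument is correct, and your forward direction is exactly the paper's: test the dual estimate on the point evaluations $\delta_{x_0}$ and compute $\|\delta_{x_0}\|_n^{\star}=1/a_n(x_0)$ by means of compactly supported bump functions. For the converse, however, you take a genuinely different route. The paper never touches the dual space: it converts the multiplicative estimate of $(\Omega)_{\text{w}}$ into the additive form $\frac{1}{a_M}\leqslant\max\big(\frac{r}{2a_N},\frac{D\,r^{1-1/\theta}}{2a_K}\big)$, valid for all $r>0$, via the elementary identity $\min_{s>0}(sa+s^{1-1/\theta}b)=\theta^{-1}(\theta^{-1}-1)^{\theta-1}a^{1-\theta}b^{\theta}$, and then verifies the unit-ball formulation $U_M\subseteq rU_N+Dr^{1-1/\theta}U_K$ of $(\Omega)$ by splitting a given $f\in U_M$ into $\varphi_1+\varphi_2$ with the decomposition lemma \cite[Lemma 3.4]{ABB2009}. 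You instead stay with the dual-norm formulation, represent any $y$ with $\|y\|_N^{\star}<\infty$ by a measure, identify $\|y\|_n^{\star}=\int_X a_n^{-1}\,d|\mu|$, and conclude by H\"older's inequality. Both arguments are sound. Yours is more parallel to the proof for (DN) and yields the multiplicative dual estimate directly, with the same $D$ and $\theta$; the cost is the duality identification, which you rightly flag as the real work: one must check that $C_c(X)$ is norm-dense in each $C(a_n)_0(X)$ so that the extension of $y$ has the same dual norm, that the representing measure (a finite measure on $C_0(X)$ multiplied by $a_N$) need not itself be finite even though $\int_X a_n^{-1}\,d|\mu|<\infty$ for all $n\geqslant N$, and that the degenerate cases $y=0$ and $\|y\|_N^{\star}=\infty$ are harmless. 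The paper's route avoids measure theory and duality altogether, at the price of invoking the equivalence of the two formulations of $(\Omega)$ and the continuous-function decomposition lemma from \cite{ABB2009}.
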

\begin{proof}\textquotedblleft{}$\Leftarrow$\textquotedblright{} We put $a:=\frac{1}{a_N}$ and $b:=\frac{1}{a_K}$ and use (cf.~\cite[proof of Lemma 29.13]{MeiseVogtEnglisch}) that $\min_{s>0}(sa+s^{1-1/\theta}\,b)=\theta^{-1}(\theta^{-1}-1)^{\theta-1}a^{1-\theta}\,b^{\theta}$ holds for arbitrary $a$ and $b>0$. We put $C:=\theta^{-1}(\theta^{-1}-1)^{\theta-1}>0$ and thus get $a^{1-\theta}\,b^{\theta}\leqslant \frac{1}{C}\,(sa+s^{1-1/\theta}\,b)$ for each $s>0$. Hence we obtain
$D\,\big(\frac{1}{a_K}\big)^{\theta}\big(\frac{1}{a_N}\big)^{1-\theta}\leqslant \frac{D}{C}\big(s\,\frac{1}{a_N}+s^{1-1/\theta}\,\frac{1}{a_K}\big)\leqslant \max\big(\,2\,\frac{D}{C}\,s\,\frac{1}{a_N},\,2\,\frac{D}{C}\,s^{1-1/\theta}\,\frac{1}{a_K}\,\big)$ for each $s>0$. Now we define $r=4\,\frac{D}{C}\,s$ and $D'=\frac{4D}{C}(\frac{C}{4D})^{1-1/\theta}$\linebreak{}$>0$, hence $s=\frac{C}{4D}r$ and therefore $2\frac{D}{C}s^{1-1/\theta}=\frac{4D}{2C}(\frac{C}{4D})^{1-1/\theta}\,r^{1-1/\theta}=\frac{D'}{2}\,r^{1-1/\theta}$. Replacing $D'$ with $D$ we get
$$
(\star)\;\;\;\;\;\;\textstyle\forall\: N\;\exists\: M\geqslant N\;\forall\: K\geqslant M\;\exists\: D>0,\,0<\theta<1\;\forall\:r>0\,\colon \frac{1}{a_M}\leqslant \max\big(\frac{r}{2a_N},\,\,\frac{D\,r^{1-1/\theta}}{2a_K}\big).
$$
Now we show ($\upOmega$) in the second formulation mentioned at the beginning of this section. Let $N$ be given. We choose $M\geqslant N$ as in $(\star)$. For given $K\geqslant M$ we select $D>0$ and $0<\theta<1$ as in $(\star)$ and take an arbitrary $r>0$. Let $f\in U_M$ be fixed, i.e.~$|f|\leqslant \frac{1}{a_M}\leqslant \max\big(\frac{r}{2a_N},\,\,\frac{D\,r^{1-1/\theta}}{2a_K}\big)$. By \cite[Lemma 3.4]{ABB2009} there exist $\varphi_1,\,\varphi_2\in C(X)$ with $0\leqslant\varphi_1,\,\varphi_2\leqslant1$, $\varphi_1+\varphi_2=f$ such that $|\varphi_1|\leqslant\frac{r}{a_N}$, $|\varphi_2|\leqslant\frac{Dr^{1-1/\theta}}{a_K}$, i.e.~$\varphi_1\in rU_N$, $\varphi_2\in Dr^{1-1/\theta}U_K$ and thus $f\in rU_N+Dr^{1-1/\theta}U_K$.
\smallskip
\\\textquotedblleft{}$\Rightarrow$\textquotedblright{} For a fixed $N$ and for $x_0\in X$ we consider $\delta_{x_0}\colon C(a_n)_{0}(X)\rightarrow \mathbb{C}$, $\delta_{x_0}(f):=f(x_0)$. Then we have $\|\delta_{x_0}\|_N^{\star}=\sup_{f\in U_N}|\delta_{x_0}(f)|=\sup_{f\in U_N}|f(x_0)|\leqslant\textstyle \frac{1}{a_N(x_0)}$. We choose $\varphi\in C_c(X)$ with $\varphi(x_0)=1$, $0\leqslant \varphi \leqslant 1$ on $X$ and put $f_0:=\frac{\varphi}{a_N}$. Then $f_0\in C(a_N)_{0}(X)$ and $\sup_{x\in X}a_N(x)|f_0(x)|$ $=\sup_{x\in X}a_N(x){\textstyle\frac{\varphi(x)}{a_N(x)}}=\sup_{x\in X}\varphi(x)=1$, i.e.~$f_0\in U_N$. $\delta_{x_0}(f_0)=f_0(x_0)=\frac{\varphi(x_0)}{a_N(x_0)}=\frac{1}{a_N(x_0)}$ implies $\|\delta_{x_0}\|_N^{\star}=\frac{1}{a_N(x_0)}$. $\text{(}\upOmega\text{)}_{\text{w}}$ is the special case of ($\upOmega$) where we choose the functional to be $\delta_{x}$ for arbitrary $x\in X$.
\end{proof}

\noindent{}After these preparations we are ready to investigate the situation we mentioned at the end of Section 2: For $i\in\{1,2\}$ let $X_i$ denote a locally compact and $\sigma$-compact Hausdorff topological space. Moreover, let $A^i=(a^i_n)_{n\in\mathbb{N}}$ be an increasing sequence of weights on $X_i$, i.e.~$a^i_n\leqslant a^i_{n+1}$ for all $n\in\mathbb{N}$. We define the double sequence $\mathcal{A}=((a_{N,n})_{N\in\mathbb{N}})_{n\in\mathbb{N}}$ by setting
$$
\textstyle a_{N,n}\,\colon X_1\times X_2\rightarrow\mathbb{R},\;\; (x_1,x_2)\mapsto a_{N,n}(x_1,x_2):=\big[a^1_N\tensor \frac{1}{a^2_n}\big](x_1,x_2)=\frac{a^1_N(x_1)}{a^2_n(x_2)}.
$$
Thus, $\mathcal{A}$ satisfies the estimates $a_{N,n+1}(x_1,x_2)\leqslant a_{N,n}(x_1,x_2)\leqslant a_{N+1,n}(x_1,x_2)$ for all $N$, $n$ and $(x_1,x_2)\in X_1\times X_2$. In the sequel we refer to a sequence of the latter form by $\mathcal{A}=A^1\tensor{}(A^2)^{-1}$. To simplify notation we put $X:=X_1\times X_2$ and consider the PLB-space $(AC)_0(X)$ in the notation established at the beginning of Section 2. In view of Theorem \ref{thm1} we investigate if there is some relation between the conditions (DN) and ($\upOmega$) for the Fr\'{e}chet spaces $C(A^i)_0(X_i)$ and the PLB-space $(AC)_0(X)$. According to the results above we can consider the weight conditions $\text{(DN)}_{\text{w}}$, $\text{(}\upOmega\text{)}_{\text{w}}$ and (wQ).

\begin{proposition}\label{Fundamental Lemma} Let $A^1$ resp.~$A^2$ be an increasing sequence of weights on $X_1$ resp.~$X_2$. Assume that $A^1$ satisfies $\text{(}\Omega\text{)}_{\text{w}}$ and that $A^2$ satisfies $\text{(DN)}_{\text{w}}$. Then the sequence $\mathcal{A}=A^1\tensor (A^2)^{-1}$ on $X_1\times{}X_2$ satisfies (wQ).
\end{proposition}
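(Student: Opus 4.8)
The plan is to unravel condition (wQ) for $\mathcal{A}=A^1\tensor(A^2)^{-1}$ into a pointwise inequality on $X_1\times X_2$ and then to feed into it the one-variable estimates coming from $\text{(}\Omega\text{)}_{\text{w}}$ and $\text{(DN)}_{\text{w}}$. Writing out the weights, $\frac{1}{a_{M,m}}(x_1,x_2)=\frac{a^2_m(x_2)}{a^1_M(x_1)}$ and similarly for the remaining terms, so that (wQ) amounts to the requirement
$$
\textstyle\frac{a^2_m(x_2)}{a^1_M(x_1)}\leqslant\max\big(\frac{S\,a^2_n(x_2)}{a^1_N(x_1)},\,\frac{S\,a^2_k(x_2)}{a^1_K(x_1)}\big)\quad\text{for all }(x_1,x_2)\in X_1\times X_2.
$$

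First I would fix the quantifiers. Given $N$, I apply $\text{(}\Omega\text{)}_{\text{w}}$ for $A^1$ to obtain the corresponding $M\geqslant N$, and I choose $n$ to be the distinguished starting index $n_0$ supplied by $\text{(DN)}_{\text{w}}$ for $A^2$. Now let $K\geqslant M$ and $m$ be given. The decisive point is the interplay of the two exponents $\theta$: condition $\text{(}\Omega\text{)}_{\text{w}}$ produces, for the triple $(N,M,K)$, a constant $D>0$ and some $0<\theta<1$ with $\frac{1}{a^1_M}\leqslant D(\frac{1}{a^1_K})^{\theta}(\frac{1}{a^1_N})^{1-\theta}$; since in $\text{(DN)}_{\text{w}}$ the exponent $\theta$ is universally quantified, I may plug this \emph{very same} $\theta$ into $\text{(DN)}_{\text{w}}$ for $A^2$ (applied to the index $\max(m,n_0)$, using that $A^2$ is increasing to absorb the case $m<n_0$ by monotonicity), obtaining $k$ and $C>0$ with $a^2_m\leqslant C(a^2_k)^{\theta}(a^2_{n_0})^{1-\theta}$.

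Multiplying these two estimates gives
$$
\textstyle\frac{a^2_m(x_2)}{a^1_M(x_1)}\leqslant DC\,\big(\frac{a^2_k(x_2)}{a^1_K(x_1)}\big)^{\theta}\big(\frac{a^2_{n_0}(x_2)}{a^1_N(x_1)}\big)^{1-\theta},
$$
and I would finish by invoking the elementary inequality $u^{\theta}v^{1-\theta}\leqslant\max(u,v)$ for $u,v>0$, $0<\theta<1$ (e.g.\ via the weighted bound $u^{\theta}v^{1-\theta}\leqslant\theta u+(1-\theta)v$), applied with $u=\frac{a^2_k(x_2)}{a^1_K(x_1)}$ and $v=\frac{a^2_{n_0}(x_2)}{a^1_N(x_1)}$. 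Setting $S:=DC$ and recalling $n=n_0$ then yields precisely the displayed form of (wQ).

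The argument is otherwise routine bookkeeping; the one genuine obstacle is the alignment of the two exponents $\theta$. It succeeds exactly because $\text{(}\Omega\text{)}_{\text{w}}$ selects its $\theta$ only after $K$ has been fixed, whereas $\text{(DN)}_{\text{w}}$ grants its estimate for \emph{every} $\theta$ — so the exponent forced upon us by $A^1$ can be imposed on $A^2$. Had both conditions prescribed their own exponent there would be no reason for the two to coincide, and the multiplication step would collapse.
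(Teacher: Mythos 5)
Your proof is correct and follows essentially the same route as the paper's: the same choice of $M$ from $\text{(}\upOmega\text{)}_{\text{w}}$ and $n$ from $\text{(DN)}_{\text{w}}$, the same key observation that the $\theta$ produced by $\text{(}\upOmega\text{)}_{\text{w}}$ can be imposed on $\text{(DN)}_{\text{w}}$ because the latter quantifies universally over $\theta$, and the same multiplication of the two estimates followed by an elementary bound of $u^{\theta}v^{1-\theta}$ by $\max(u,v)$ (the paper uses $u^{\theta}v^{1-\theta}\leqslant u+v$ and $S=2CD$, you use the weighted AM--GM and $S=CD$). Your explicit treatment of the case $m<n_0$ via monotonicity is a small point the paper passes over silently, but otherwise the arguments coincide.
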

\begin{proof}It suffices to show
$$
\textstyle\forall\: N\;\exists\:M\geqslant N,\,n\;\forall\:K\geqslant M,\,m\;\exists\:k,\,S>0\;\forall\:(x_1,x_2)\in X_1\times X_2\,\colon\textstyle\frac{a_m^2(x_2)}{a_M^1(x_1)}\leqslant S \max\big( \frac{a^2_n(x_2)}{a^1_N(x_1)},\frac{a^2_k(x_2)}{a^1_K(x_1)} \big).
$$
In order to do this, let $N$ be given. We select $M\geqslant N$ as in $\text{(}\upOmega\text{)}_{\text{w}}$ and $n$ as in $\text{(DN)}_{\text{w}}$. For given $K\geqslant M$ there exist $D>0$ and $0<\theta<1$ with the estimate in $\text{(}\upOmega\text{)}_{\text{w}}$. For arbitrary $m$ and the same $\theta$ there exist $k\geqslant m$ and $C>0$ with the estimate in $\text{(DN)}_{\text{w}}$. We put $S:=2CD$ and multiply the estimates in $\text{(DN)}_{\text{w}}$ and $\text{(}\upOmega\text{)}_{\text{w}}$ to get $\frac{a^2_m(x_2)}{a^1_M(x_1)}\leqslant CD (\frac{a^2_k(x_2)}{a^1_K(x_1)})^{\theta}(\frac{a^2_n(x_2)}{a^1_N(x_1)})^{1-\theta}\leqslant CD(\frac{a^2_k(x_2)}{a^1_K(x_1)}+\frac{a^2_n(x_2)}{a^1_N(x_1)})\leqslant S\max(\frac{a^2_k(x_2)}{a^1_K(x_1)},\,\frac{a^2_n(x_2)}{a^1_N(x_1)})$ for each $(x_1,x_2)\in X_1\times X_2$.
\end{proof}

\noindent{}In the light of Theorem \ref{thmC} it is natural to ask wether there is any relation between the conditions (wQ) and (B); this was one of the objectives in \cite{DA}. In the latter work a variant of the following example was stated to show that \textquotedblleft{}(wQ)$\Rightarrow$(B)\textquotedblright{} is not true in general. Proposition \ref{Fundamental Lemma} yields a very easy way of concluding that the double sequence $\mathcal{A}$ in Example \ref{counter example for (B)} satisfies (wQ); we believe that this is less complicated than the approach of \cite{DA}.

\begin{example}\label{counter example for (B)}In general, the assumptions of Proposition \ref{Fundamental Lemma} do not imply that $\mathcal{A}$ satisfies condition (B). Let $s$ be the Fr\'{e}chet space of rapidly decreasing sequences. Then $s= \{x\in \mathbb{K}^{\mathbb{N}} \; ; \; \forall \: k\in \mathbb{N}\,\colon\lim_{j\rightarrow\infty} j^k|x_j|=0\}=CA_0(\mathbb{N})$ holds  for $A=(a_k)_{k\in\mathbb{N}}$ with $a_k(j)=j^k$ by \cite[Proposition 28.16]{MeiseVogtEnglisch}. In particular, $s$ satisfies (DN) and ($\upOmega$), see e.g.~Meise, Vogt \cite[Section III.29]{MeiseVogtEnglisch}. Now we consider $\mathcal{A}=((a_N\otimes{}a_n^{-1})_{n\in\mathbb{N}})_{N\in\mathbb{N}}$ on $\mathbb{N}\times\mathbb{N}$, i.e.~$(AC)_0(\mathbb{N}\times\mathbb{N})=\proj{N}\ind{n}C(a_N\otimes{}a_n^{-1})_0(\mathbb{N}\times{}\mathbb{N})$, which satisfies (wQ) by Proposition \ref{Fundamental Lemma}. Straight forward calculations show that $\mathcal{A}$ does not satisfy (B).
\end{example}

\begin{theorem}\label{Main result} Let $A^1$ resp.~$A^2$ be an increasing sequence of weights on $X_1$ resp.~$X_2$. Consider the sequence $\mathcal{A}=A^1\tensor(A^2)^{-1}$ on $X=X_1\times{}X_2$ and assume that $C(A^1)_0(X_1)$ satisfies ($\Omega$) and that $C(A^2)_0(X_2)$ satisfies (DN). Then $(AC)_0(X)$ is ultrabornological.
\end{theorem}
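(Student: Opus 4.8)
The plan is to reduce the assertion to condition (wQ) for the double sequence $\mathcal{A}$ and then to invoke the homological characterisation already at our disposal. Indeed, by the equivalence (ii)$\,\Leftrightarrow\,$(iv) in Theorem \ref{thm1}, the space $(AC)_0(X)$ is ultrabornological precisely when $\mathcal{A}$ satisfies (wQ). Thus it suffices to verify (wQ) for $\mathcal{A}=A^1\tensor(A^2)^{-1}$, and the whole argument then becomes a matter of assembling the characterisations established in the previous section.

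First I would translate the two Fr\'{e}chet-space hypotheses into conditions on the defining weight sequences. Since each $X_i$ is locally compact, $\sigma$-compact and Hausdorff and each $A^i$ is increasing, the lemmas of the previous section apply verbatim. From the hypothesis that $C(A^1)_0(X_1)$ satisfies ($\Omega$), Lemma \ref{Frechet characterization lemma 2} yields that $A^1$ satisfies $\text{(}\Omega\text{)}_{\text{w}}$; likewise, from the hypothesis that $C(A^2)_0(X_2)$ satisfies (DN), Lemma \ref{Frechet characterization lemma 1} gives that $A^2$ satisfies $\text{(DN)}_{\text{w}}$.

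With both weight conditions in hand, the heart of the matter is already packaged in Proposition \ref{Fundamental Lemma}: applied to $A^1$ and $A^2$ it shows directly that $\mathcal{A}=A^1\tensor(A^2)^{-1}$ satisfies (wQ). Combining this with the reduction of the first paragraph, Theorem \ref{thm1} then delivers that $(AC)_0(X)$ is ultrabornological, which completes the argument.

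It is worth noting where the actual work resides, since the theorem is essentially a corollary with its analytic content front-loaded into the preceding results. The passage from the locally convex invariants to the pointwise weight inequalities is carried by Lemmas \ref{Frechet characterization lemma 1} and \ref{Frechet characterization lemma 2}; the multiplicative interplay that turns $\text{(}\Omega\text{)}_{\text{w}}$ for $A^1$ together with $\text{(DN)}_{\text{w}}$ for $A^2$ into (wQ) for the product sequence is the substance of Proposition \ref{Fundamental Lemma}; and the equivalence of (wQ) with ultrabornologicity is the deep input from \cite{ABB2009} recorded in Theorem \ref{thm1}. Consequently I do not anticipate a genuine obstacle in the proof itself; the only point requiring care is to confirm that the standing hypotheses on $X_1$, $X_2$ and on the monotonicity of $A^1$, $A^2$ are exactly those under which the three cited results were proved, so that each may be applied without modification.
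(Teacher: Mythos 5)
Your proposal is correct and follows exactly the route the paper takes: the paper's proof is the one-line observation that the theorem follows from Lemmas \ref{Frechet characterization lemma 1} and \ref{Frechet characterization lemma 2}, Proposition \ref{Fundamental Lemma} and Theorem \ref{thm1}, which is precisely the chain of reductions you spell out. Your version merely makes explicit the direction of each implication used and the verification of the standing hypotheses, which is a faithful expansion rather than a different argument.
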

\begin{proof} The result follows immediately from Lemmas \ref{Frechet characterization lemma 1} and \ref{Frechet characterization lemma 2}, Proposition \ref{Fundamental Lemma} and Theorem \ref{thm1}.
\end{proof}

\begin{remark}\label{DN-bar-Omega-bar-bar-remark} Changing certain quantifiers in the definitions of (DN) and ($\upOmega$), the latter turn into the weaker resp.~stronger conditions (\underline{DN}) and ($\overline{\overline{\upOmega}}$). For the definition and recent results involving (\underline{DN}) and ($\overline{\overline{\upOmega}}$) we refer to Bonet, Doma\'{n}ski \cite{BoDo2001,BoDo2006,BoDo2008}.
\smallskip
\\Having (\underline{DN}) and ($\overline{\overline{\upOmega}}$) at hand it is easy to see how weight conditions $\text{(}\underbar{\text{DN}}\text{)}_{\text{w}}$ and $\text{(}\overline{\overline{\upOmega}}\text{)}_{\text{w}}$ have to be defined in order to characterize the invariants (\underline{DN}) and ($\overline{\overline{\upOmega}}$) for weighted Fr\'{e}chet spaces $CA_0(X)$. Then it is not hard to see that Proposition \ref{Fundamental Lemma} and Theorem \ref{Main result} are also valid with (DN) resp.~($\upOmega$) replaced by (\underline{DN}) resp.~($\overline{\overline{\upOmega}}$).
\smallskip
\\Since a detailed proof for these variants of Proposition \ref{Fundamental Lemma} and Theorem \ref{Main result} is straight forward, we ommit it at this point; a detailed exposition can be found in \cite{PHD}. However, the results might be of interest in view of applications and examples.
\end{remark}

\section{Tensor product representation}\label{Tensor product representation}

The constructions in the earlier sections already suggest the question, wether the space $(AC)_0(X)$ with an underlying sequence $\mathcal{A}=A^1\tensor{}(A^2)^{-1}$ can be realized as the tensor product of a Fr\'{e}chet and an LB-space. In the last section we deduce a representation of this kind, which finally will enable us (see Theorem \ref{(F)-(DF)-ub-cor}) to utilize Theorem \ref{Main result} to prove a criterion for the ultrabornologicity of an $\epsilon$-tensor product of a weighted Fr\'{e}chet space of continuous functions and a weighted LB-space of continuous functions. For further information on the general question of determining locally convex properties of tensor products of a Fr\'{e}chet and a DF-space we refer to our comments in Section 1.
\smallskip
\\In what follows we need the so-called regularly decreasing condition; we refer to Bierstedt, Meise, Summers \cite[Definition 2.1]{BMS1982} for the definition and further information.

\begin{proposition}\label{representation theorem} Let $A^1$ resp.~$A^2$ be an increasing sequence of weights on $X_1$ resp.~$X_2$. Consider the double sequence $\mathcal{A}=A^1\tensor(A^2)^{-1}$ on $X=X_1\times{}X_2$ and assume that $\mathcal{V}^2=(A^2)^{-1}$ is regularly decreasing. Then we have the isomorphism
$$
C(A^1)_0(X_1)\tensorcheck \mathcal{V}_0^2C(X_2)\cong (AC)_0(X),
$$
where $C(A^1)_0(X_1)=\proj{n}C(a^1_N)_0(X_1)$ resp.~$\mathcal{V}_0^2C(X_2)=\ind{n}C((a^2_n)^{-1})_0(X_2)$ is a weighted Fr\'{e}chet resp. LB-space of continuous functions.
\end{proposition}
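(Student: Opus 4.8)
The plan is to build the isomorphism by commuting the injective tensor product with the projective limit defining the Fr\'{e}chet factor and with the inductive limit defining the LB-factor, after first settling the corresponding identity on the level of the weighted Banach building blocks. Throughout write $E:=C(A^1)_0(X_1)=\proj{N}E_N$ with $E_N:=C(a^1_N)_0(X_1)$, and $F:=\mathcal{V}_0^2C(X_2)=\ind{n}F_n$ with $F_n:=C((a^2_n)^{-1})_0(X_2)$, so that the goal is to identify $E\tensorcheck F$ with $\proj{N}\ind{n}C(a_{N,n})_0(X)$. Note that each weighted Banach space $C(a)_0(Y)$ is isometrically isomorphic to the unweighted space $C_0(Y)$ via multiplication by the weight, and hence carries the approximation property; consequently all completed injective tensor products below agree with the corresponding $\varepsilon$-products, which is the form most convenient for handling limits.

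First I would establish the Banach building-block identity, for each pair $(N,n)$,
$$E_N \tensorcheck F_n = C(a^1_N)_0(X_1)\tensorcheck C((a^2_n)^{-1})_0(X_2)\cong C(a_{N,n})_0(X),$$
the candidate isomorphism sending $f\tensor g$ to the function $(x_1,x_2)\mapsto f(x_1)g(x_2)$. Using the weight transforms $f\mapsto a^1_Nf$, $g\mapsto (a^2_n)^{-1}g$ and $h\mapsto a_{N,n}h$ to identify the three weighted spaces isometrically with $C_0(X_1)$, $C_0(X_2)$ and $C_0(X)$, and recalling that $a_{N,n}=a^1_N\tensor(a^2_n)^{-1}$ factorizes exactly as the product of the two weights, the tensor map is carried onto the classical Grothendieck isomorphism $C_0(X_1)\tensorcheck C_0(X_2)\cong C_0(X_1\times X_2)$ \cite{Grothendieck1955}; functoriality of $\tensorcheck$ under the isometric weight transforms then transports this back. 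Here $X=X_1\times X_2$ is again locally compact, $\sigma$-compact and Hausdorff, so Grothendieck's identity applies.

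Next, for fixed $N$, I would pass to the inductive limit in the second variable to obtain
$$E_N \tensorcheck F \cong \ind{n}(E_N \tensorcheck F_n)\cong \ind{n}C(a_{N,n})_0(X)=(\mathcal{A}_N)_0C(X).$$
A continuous injection of $\ind{n}(E_N\tensorcheck F_n)$ into $E_N\tensorcheck F$ always exists because $\tensorcheck$ is injective. This is the step that consumes the regularly decreasing hypothesis on $\mathcal{V}^2=(A^2)^{-1}$, and I expect it to be the main obstacle. By \cite{BMS1982}, regular decrease makes $F$ a regular, complete LB-space whose bounded, hence relatively compact, subsets are localized in a single step $F_n$ on which the step topology and the limit topology coincide. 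Viewing an element of $E_N\tensorcheck F=E_N\,\varepsilon\,F$ as an operator $(E_N)'_c\to F$ that sends the weak-$\ast$ compact unit ball of $(E_N)'$ to a relatively compact subset of $F$, this localization forces the operator to factor through some $F_n$, giving surjectivity of the natural map; the agreement of topologies on bounded sets then upgrades the continuous bijection to a topological isomorphism (alternatively, one has a continuous bijection between complete webbed spaces and invokes the open mapping theorem, cf.~\cite{Jarchow}).

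Finally I would commute $\tensorcheck$ with the projective limit in the first variable. Since $E=\proj{N}E_N$ is a reduced spectrum of Banach spaces and $F$ is complete (again by regular decrease), the injective tensor product commutes with the projective limit, whence
$$E \tensorcheck F \cong \proj{N}(E_N \tensorcheck F)\cong \proj{N}(\mathcal{A}_N)_0C(X)=(AC)_0(X),$$
using the previous step in the middle. Equivalently, the whole passage can be phrased through the weighted vector-valued representation $C(A^1)_0(X_1)\tensorcheck F\cong C(A^1)_0(X_1,F)$, which packages the projective limit automatically and reduces everything to the localization argument above. Assembling the three steps yields the asserted isomorphism; the only genuinely non-formal ingredient is the inductive-limit commutation, and it is precisely there that the regularly decreasing assumption is used.
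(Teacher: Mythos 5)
Your proof follows the same three-step decomposition as the paper: commute $\tensorcheck$ with the projective limit, commute it with the inductive limit (this is where regular decrease enters, via compact regularity of $\mathcal{V}^2_0C(X_2)$ from \cite[Corollary 2.7]{BMS1982}), and identify the Banach building blocks $C(a^1_N)_0(X_1)\tensorcheck C((a^2_n)^{-1})_0(X_2)\cong C(a^1_N\tensor(a^2_n)^{-1})_0(X_1\times X_2)$. The difference is that where the paper cites ready-made results — Hollstein \cite[Theorem 4.1]{Hollstein1980} for the inductive-limit step and Bierstedt \cite[Theorem 1.2]{Bierstedt1974} for the Banach-level identity — you sketch direct arguments: the multiplication-operator reduction to $C_0(X_1)\tensorcheck C_0(X_2)\cong C_0(X_1\times X_2)$ is a clean and correct replacement for the Bierstedt citation, and your localization argument (an element of $E_N\,\varepsilon\,F$ maps the polar of the unit ball, which is compact in $(E_N)'_{co}$, onto a compact subset of $F$, which compact regularity places inside a single step) is exactly the mechanism behind Hollstein's theorem. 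Two caveats. First, the phrase \textquotedblleft{}bounded, hence relatively compact\textquotedblright{} is wrong as stated for a general LB-space; what you actually need and use is that the specific sets $T(U^{\circ})$ are relatively compact and that compact sets localize with agreement of topologies (compact regularity), so this is a slip of wording rather than of substance. Second, your parenthetical alternative via the open mapping theorem for webbed spaces does not obviously apply: De Wilde's theorem would require $E_N\tensorcheck F$ to be ultrabornological (or the inverse map to go into a webbed space from an ultrabornological one), and ultrabornologicity of such tensor products is precisely what is \emph{not} known a priori — indeed it is the conclusion of Theorem \ref{(F)-(DF)-ub-cor}. Stick with the direct topological argument from compact regularity; with that, your proof is sound and essentially self-contained where the paper is citation-driven.
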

\begin{proof}We compute
\begin{align*}
C(A^1)_0(X_1)\tensorcheck \mathcal{V}_0^2C(X_2) &\stackrel{\text{\tiny{dfn}}}{=} \big(\proj{N}C(a^1_N)_0(X_1)\big)\tensorcheck\big(\ind{n}C((a^2_n)^{-1})_0(X_2)\big)\\
&\stackrel{\text{\tiny(1)}}{=} \proj{N}\big[C(a_N^1)_0(X_1) \tensorcheck \ind{n}C((a^2_n)^{-1})_0(X_2)\big]\\
&\stackrel{\text{\tiny(2)}}{=} \proj{N}\ind{n}\big[C(a_N^1)_0(X_1) \tensorcheck C((a^2_n)^{-1})_0(X_2)\big]\\
&\stackrel{\text{\tiny(3)}}{=} \proj{N}\ind{n}\big[C(a_N^1\tensor (a_n^2)^{-1})_0(X_1\times X_2)\big]\\
&\stackrel{\text{\tiny{dfn}}}{=} (AC)_0(X).
\end{align*}
The isomorphy (1) is true in general, see e.g.~Jarchow \cite[16.3.2]{Jarchow}.
\smallskip
\\$(2)$ can be seen as follows. We have
\begin{itemize}
\item[a.] \cite[Theorem 4.1]{Hollstein1980}: $C(a_N^1)_0(X_1)$ is an $\ep$-space by Hollstein \cite[Proposition 2.3]{Hollstein1980},
\item[b.] \cite[Theorem 4.1.(ii)]{Hollstein1980}: $\mathcal{V}_0^2C(X_2)$ is quasi-complete, compact-regular (cf.~Bierstedt, Mei\-se, Summers \cite[Corollary 2.7]{BMS1982}) and all the $C((a^2_n)^{-1})_0(X_2)$ have the approximation property (cf.~Bierstedt \cite[Theorem 5.5.(3)]{Bierstedt1973}),
\item[c.] \cite[Proposition 4.4.(1)]{Hollstein1980}: $C(a_N^1)_0(X_1)$ is Banach and $\mathcal{V}_0^2C(X_2)$ is compact-regular (see b.),
\item[d.] \cite[Theorem 4.1., 2nd part]{Hollstein1980}: $\mathcal{V}_0^2C(X_2)$ and all the $C((a^2_n)^{-1})_0(X_2)$ are complete (see b.),
\end{itemize}
Therefore by Hollstein \cite[Theorem 4.1]{Hollstein1980} we have an isomorphism
$$
\ind{n}\big[C(a_N^1)_0(X_1) \tensorcheck C((a^2_n)^{-1})_0(X_2)\big]\stackrel{\sim}{\longrightarrow} C(a_N^1)_0(X_1) \tensorcheck \ind{n}C((a^2_n)^{-1})_0(X_2)
$$
for each $N$. It is easy to see that the latter yields an isomorphism of the corresponding projective limits and we therefore get the desired isomorphism (2).
\smallskip
\\Finally, a result of Bierstedt \cite[Theorem 1.2]{Bierstedt1974} yields an isomorphism
$$
C(a_N^1)_0(X_1) \tensorcheck C((a^2_n)^{-1})_0(X_2)\stackrel{\sim}{\longrightarrow}C(a_N^1\tensor (a_n^2)^{-1})_0(X_1\times X_2),
$$
for each $N$ and $n$ which is given by $\textstyle\sum_{i=1}^jf_j\tensor g_i\mapsto\big[(x_1,x_2)\mapsto \sum_{i=1}^jf_i(x_1)g_i(x_2)\big]$ -- note that the equality $C(a_N^1)_0(X_1) \tensorcheck C((a^2_n)^{-1})_0(X_2)=C(a_N^1)_0(X_1)$ $\tensorepsilon C((a^2_n)^{-1})_0(X_2)$ follows from K\"othe \cite[\textsection{}44, 2.(5)]{KoetheII}, since both spaces are complete and $C((a^2_n)^{-1})_0(X_2)$ has the approximation property (see a.~above). Again it is easy to see that the above isomorphisms yield an isomorphism of the corresponding PLB-spaces. This finally shows $(3)$.
\end{proof}

\noindent{}In the case of sequence spaces, the above tensor product is of the type $\lambda^0(A)\tensorcheck k^0(B)$, i.e.~it is the tensor product of a K\"othe echelon and K\"othe coechelon space and thus Proposition \ref{representation theorem} can be regarded as an extension of \cite[Lemma 4.3]{ABB2009} to continuous functions.
\smallskip
\\The space considered in Example \ref{counter example for (B)}, $(AC)_0(\mathbb{N}\times\mathbb{N})$ with $\mathcal{A}=((a_N\tensor{}a_n^{-1})_{n\in\mathbb{N}})_{N\in\mathbb{N}}$ and $a_k(j)=j^k$, is by Proposition \ref{representation theorem} isomorphic to the space $s\tensorcheck k^0(B)$ where $B=(j^{-k})_{j,k\in\mathbb{N}}$ (in this case the regularly decreasing condition \cite[Definition 3.1]{BMS1982a} can easily be verified). In fact, $(AC)_0(\mathbb{N}\times\mathbb{N})$ is isomorphic to $s\tensorcheck{}s'_b$ as Corollary \ref{COR-2} below will show.

\begin{theorem}\label{(F)-(DF)-ub-cor}Let $A^1$ resp.~$A^2$ be an increasing sequence of weights on $X_1$ resp.~$X_2$. Assume that $\mathcal{V}^2=(A^2)^{-1}$ is regularly decreasing, that $C(A^1)_0(X_1)$ satisfies ($\Omega$) and that $C(A^2)_0(X_2)$ satisfies (DN). Then the $\ep$-tensor product $C(A^1)_0(X_1)\tensorcheck \mathcal{V}_0^2C(X_2)$ of a Fr\'{e}chet space and a DF-space is ultrabornological.
\end{theorem}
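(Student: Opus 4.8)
The plan is to combine the tensor-product representation from Proposition~\ref{representation theorem} with the ultrabornologicity criterion from Theorem~\ref{Main result}. The two statements share exactly the hypotheses we need: the regular-decrease assumption on $\mathcal{V}^2=(A^2)^{-1}$ is precisely what Proposition~\ref{representation theorem} requires, while the conditions that $C(A^1)_0(X_1)$ satisfies ($\Omega$) and $C(A^2)_0(X_2)$ satisfies (DN) are exactly the hypotheses of Theorem~\ref{Main result}. So the entire proof should reduce to transporting the conclusion of Theorem~\ref{Main result} across the isomorphism supplied by Proposition~\ref{representation theorem}.

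First I would invoke Proposition~\ref{representation theorem}: since $\mathcal{V}^2$ is regularly decreasing, we obtain the isomorphism
$$
C(A^1)_0(X_1)\tensorcheck \mathcal{V}_0^2C(X_2)\cong (AC)_0(X),
$$
where $\mathcal{A}=A^1\tensor(A^2)^{-1}$ on $X=X_1\times X_2$. Next I would apply Theorem~\ref{Main result}, whose hypotheses — that $C(A^1)_0(X_1)$ satisfies ($\Omega$) and $C(A^2)_0(X_2)$ satisfies (DN) — are assumed here verbatim; it yields that $(AC)_0(X)$ is ultrabornological. Since ultrabornologicity is an isomorphism invariant and the displayed map is a topological isomorphism, the $\ep$-tensor product $C(A^1)_0(X_1)\tensorcheck \mathcal{V}_0^2C(X_2)$ is ultrabornological as well.

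The only point that deserves a word is the consistency check that the \emph{same} weight sequences $A^1,A^2$ feeding the two invariants also produce, via $\mathcal{A}=A^1\tensor(A^2)^{-1}$, exactly the double sequence whose PLB-space appears on the right-hand side of the representation; but this is built into the setup of Section~\ref{Tensor product representation}, so no separate argument is needed. Consequently there is no genuine obstacle here: the theorem is a direct corollary obtained by chaining Proposition~\ref{representation theorem} and Theorem~\ref{Main result}. The substantive work has already been done in establishing those two results — in particular the passage through Hollstein's and Bierstedt's tensor-product theorems for the representation, and the multiplicative combination of $\text{(DN)}_{\text{w}}$ and $\text{(}\Omega\text{)}_{\text{w}}$ in Proposition~\ref{Fundamental Lemma} — so the proof of the present statement is essentially a one-line synthesis.
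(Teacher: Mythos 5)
Your proposal is correct and coincides with the paper's own proof, which likewise derives the statement immediately by combining Proposition~\ref{representation theorem} (the tensor-product representation, using the regularly decreasing hypothesis) with Theorem~\ref{Main result} (ultrabornologicity of $(AC)_0(X)$ under ($\Omega$) and (DN)). No further comment is needed.
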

\begin{proof} The result follows directly from Theorem \ref{Main result} and Proposition \ref{representation theorem}.
\end{proof}

\begin{remark}\label{(F)-(DF)-ub-cor-rem}In view of Remark \ref{DN-bar-Omega-bar-bar-remark} it is clear that Theorem \ref{(F)-(DF)-ub-cor} is also valid if we replace (DN) resp.~($\upOmega$) by (\underline{DN}) resp.~($\overline{\overline{\upOmega}}$). Again we refer to \cite{PHD} for details.\hfill\qed
\end{remark}

\noindent{}In the case of sequence spaces the meaning of Theorem \ref{(F)-(DF)-ub-cor} and Remark \ref{(F)-(DF)-ub-cor-rem} is the following: Let $A=(a_n)_{n\in\mathbb{N}}$ be a K\"othe matrix and $B=(b_n)_{n\in\mathbb{N}}$ be a decreasing sequence of strictly positive functions on $\mathbb{N}$ which is regularly decreasing. Put $B^{-1}=(b^{-1}_n)_{n\in\mathbb{N}}$. Assume that $\lambda^0(A)$ satisfies ($\upOmega$) and $\lambda^0(B^{-1})$ satifies (DN) or that $\lambda^0(A)$ satisfies ($\overline{\overline{\upOmega}}$) and $\lambda^0(B^{-1})$ satisfies (\underline{DN}). Then the two statements above imply that the space $\lambda^0(A)\tensorcheck{}k^0(B)$ is ultrabornological. The latter follows also from the results in \cite[Section 4]{ABB2009} and Proposition \ref{Fundamental Lemma}, since in \cite[Section 4]{ABB2009} ultrabornologicity of the space $\lambda^0(A)\tensorcheck{}k^0(B)$ was characterized via condition (wQ). Let us add that in the case of sequence spaces, $B$ is regularly decreasing if and only 
if $\lambda^0(B^{-1})$ is quasinormable and that this is equivalent to condition (wS) of Bierstedt, Meise, Summers, see \cite[Proposition on p.~48 and Proposition 3.2]{BMS1982a}.
\bigskip
\\The setup of this section, in particular the fact that in Theorem \ref{(F)-(DF)-ub-cor} and Remark \ref{(F)-(DF)-ub-cor-rem} one of the assumptions is an assumption on the space $C(A^2)_0(X_2)$ whereas the conclusion affects the space $\mathcal{V}^2_0C(X_2)$ with $\mathcal{V}^2=(A^2)^{-1}$, suggests the question if in the latter statements  $\mathcal{V}^2_0C(X_2)$ can be replaced by $C(A^2)_0(X_2)'_b$ or if dually $C(A^2)_0(X_2)$ can be replaced by $\mathcal{V}^2_0C(X_2)'_b$.
\smallskip
\\For sequence spaces we have (in the notation above) $k^0(B)'_b\cong{}\lambda^1(B^{-1})$, see \cite[Proposition 2.10]{Bierstedt1988}. If now $k^0(B)'_b$ satisfies (DN), the same is true for $\lambda^1(B^{-1})$ and we can utilize \cite[Exercise 29.8]{MeiseVogtEnglisch} in combination with arguments similar to the proof of \cite[Lemma 29.10]{MeiseVogtEnglisch} to obtain that the K\"othe matrix $B^{-1}$ satisfies $\text{(DN)}_{\text{w}}$. Then Lemma \ref{Frechet characterization lemma 1} implies that $\lambda^0(B^{-1})$ satisfies (DN). Thus, in the case of sequence spaces our second question can be answered positively as follows.

\begin{corollary}\label{COR-1} Let $A=(a_n)_{n\in\mathbb{N}}$ be a K\"othe matrix and $B=(b_n)_{n\in\mathbb{N}}$ be a decreasing sequence of strictly positive functions on $\mathbb{N}$ which is regularly decreasing. Assume that $\lambda^0(A)$ satisfies ($\Omega$) and $k^0(B)'_b$ satifies (DN) or that $\lambda^0(A)$ satisfies ($\overline{\overline{\Omega}}$) and $k^0(B)'_b$ satisfies (\underline{DN}). Then the space $\lambda^0(A)\tensorcheck{}k^0(B)$ is ultrabornological.\hfill\qed
\end{corollary}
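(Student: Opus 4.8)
The plan is to reduce the hypothesis on the strong dual $k^0(B)'_b$ to the hypothesis on $\lambda^0(B^{-1})$ that actually feeds into Theorem \ref{(F)-(DF)-ub-cor}, and then to quote that theorem verbatim. Concretely, I specialize Theorem \ref{(F)-(DF)-ub-cor} to sequence spaces with $A^1=A$ and $A^2=B^{-1}$, so that $C(A^1)_0(X_1)=\lambda^0(A)$, $C(A^2)_0(X_2)=\lambda^0(B^{-1})$ and $\mathcal{V}^2=(A^2)^{-1}=B$; the standing assumption that $B$ be regularly decreasing is exactly the regular-decreasingness of $\mathcal{V}^2$ required there. Thus it suffices to show that \textquotedblleft{}$k^0(B)'_b$ satisfies (DN)\textquotedblright{} forces \textquotedblleft{}$\lambda^0(B^{-1})$ satisfies (DN)\textquotedblright{}, after which Theorem \ref{(F)-(DF)-ub-cor} yields ultrabornologicity of $\lambda^0(A)\tensorcheck k^0(B)$ at once.

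For this reduction I would proceed in three short steps. First, I identify the dual: by \cite[Proposition 2.10]{Bierstedt1988} one has $k^0(B)'_b\cong\lambda^1(B^{-1})$, so the hypothesis says precisely that $\lambda^1(B^{-1})$ satisfies (DN). Second, I transfer (DN) from this $\ell^1$-realization to the weight condition $\text{(DN)}_{\text{w}}$ for the matrix $B^{-1}$; this is where \cite[Exercise 29.8]{MeiseVogtEnglisch} enters, together with the log-convexity manipulations from the proof of \cite[Lemma 29.10]{MeiseVogtEnglisch}. Third, once $B^{-1}$ is known to satisfy $\text{(DN)}_{\text{w}}$, Lemma \ref{Frechet characterization lemma 1}, applied on $X=\mathbb{N}$ to the increasing matrix $B^{-1}$, returns that $\lambda^0(B^{-1})=C(B^{-1})_0(\mathbb{N})$ satisfies (DN), closing the loop.

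The delicate point is the second step. Condition (DN) is a priori attached to a fixed fundamental system of seminorms --- here the $\ell^1$-norms of $\lambda^1(B^{-1})$ --- whereas $\text{(DN)}_{\text{w}}$ is a pointwise interpolation inequality between the matrix entries, which is naturally tied to the sup-norm ($c_0$-)realization $\lambda^0(B^{-1})$. What must be shown is essentially that (DN) is independent of the order of the K\"othe space, i.e.~that the interpolation inequality in the $\ell^1$-norms already forces the corresponding pointwise inequality on the weights; this is exactly what one extracts from \cite[Exercise 29.8]{MeiseVogtEnglisch} by running the convexity argument of \cite[Lemma 29.10]{MeiseVogtEnglisch}. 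Everything on either side of this step is formal. Finally, for the variant with ($\overline{\overline{\Omega}}$) and (\underline{DN}) I would run the identical chain with the underlined conditions in place of (DN) and invoke Remark \ref{(F)-(DF)-ub-cor-rem}, which records that Theorem \ref{(F)-(DF)-ub-cor} remains valid after this substitution.
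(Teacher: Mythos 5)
Your proposal is correct and follows essentially the same route as the paper: the paper likewise deduces Corollary~\ref{COR-1} from Theorem~\ref{(F)-(DF)-ub-cor} (and Remark~\ref{(F)-(DF)-ub-cor-rem}) by identifying $k^0(B)'_b\cong\lambda^1(B^{-1})$ via \cite[Proposition 2.10]{Bierstedt1988}, transferring (DN) to the weight condition $\text{(DN)}_{\text{w}}$ for $B^{-1}$ using \cite[Exercise 29.8]{MeiseVogtEnglisch} together with the argument of \cite[Lemma 29.10]{MeiseVogtEnglisch}, and then invoking Lemma~\ref{Frechet characterization lemma 1} to conclude that $\lambda^0(B^{-1})$ satisfies (DN). Your identification of the order-independence of (DN) as the only delicate step matches the paper's presentation exactly.
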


\noindent{}In view of the second question, we have $\lambda^0(B^{-1})'_b\cong{}k^1(B)$, see \cite[Proposition 2.10]{Bierstedt1988}. Moreover, if $\lambda^0(B^{-1})$ or equivalently $\lambda^0(B^{-1})'_b$, is nuclear then $k^1(B)\cong{}k^0(B)$ is valid (cf.~\cite[remark after Proposition 2.15]{Bierstedt1988}) and hence also our first question admits a positive answer in the case of (nuclear) sequence spaces. Let us add at this point that nuclearity of $\lambda^0(B^{-1})$ is equivalent to $B$ satisfying the Grothendieck-Pietsch condition (N), see \cite[Remark after Theorem 5.4]{BMS1982a}, which implies the regularly decreasing condition by \cite[Remark after Theorem 5.4 and Proposition 4.8]{BMS1982a}.

\begin{corollary}\label{COR-2} Let $A=(a_n)_{n\in\mathbb{N}}$ and $B=(b_n)_{n\in\mathbb{N}}$ be K\"othe matrices such that $\lambda^0(B)$, or equivalently its strong dual, is nuclear. Assume that $\lambda^0(A)$ satisfies ($\Omega$) and $\lambda^0(B)$ satifies (DN) or that $\lambda^0(A)$ satisfies ($\overline{\overline{\Omega}}$) and $\lambda^0(B)$ satisfies (\underline{DN}). Then the space $\lambda^0(A)\tensorcheck{}\lambda^0(B)'_b$ is ultrabornological.\hfill\qed
\end{corollary}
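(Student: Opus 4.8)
The plan is to deduce Corollary \ref{COR-2} from Theorem \ref{(F)-(DF)-ub-cor} (and, in the second case, from Remark \ref{(F)-(DF)-ub-cor-rem}) by identifying the strong dual $\lambda^0(B)'_b$ with exactly the weighted LB-space that plays the role of the DF-factor in that theorem. The decisive ingredient is the assumed nuclearity of $\lambda^0(B)$, which is what turns the dual of the Fr\'echet space into a coechelon space of order zero.

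First I would record the relevant duality. Writing $B^{-1}=(b_n^{-1})_{n\in\mathbb{N}}$ for the associated decreasing sequence, Bierstedt \cite[Proposition 2.10]{Bierstedt1988} gives $\lambda^0(B)'_b\cong k^1(B^{-1})$. Since $\lambda^0(B)$ is nuclear, the remark after \cite[Proposition 2.15]{Bierstedt1988} yields $k^1(B^{-1})\cong k^0(B^{-1})$, so that
$$
\lambda^0(B)'_b\cong k^0(B^{-1}).
$$
As the completed injective tensor product is functorial with respect to topological isomorphisms in each variable, this induces
$$
\lambda^0(A)\tensorcheck\lambda^0(B)'_b\cong\lambda^0(A)\tensorcheck k^0(B^{-1}),
$$
and since ultrabornologicity is an isomorphism invariant it suffices to prove that the right-hand space is ultrabornological.

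Next I would verify the hypotheses of Theorem \ref{(F)-(DF)-ub-cor} for the pair $A^1:=A$ and $A^2:=B$, so that $\mathcal{V}^2=(A^2)^{-1}=B^{-1}$ and $\mathcal{V}_0^2C=k^0(B^{-1})$. By assumption $C(A^1)_0(X_1)=\lambda^0(A)$ satisfies $(\Omega)$ and $C(A^2)_0(X_2)=\lambda^0(B)$ satisfies (DN). For the remaining regularity hypothesis I would invoke the equivalence cited just before the corollary: nuclearity of $\lambda^0(B)$ is equivalent to $B^{-1}$ satisfying the Grothendieck--Pietsch condition (N) \cite[Remark after Theorem 5.4]{BMS1982a}, which in turn forces $B^{-1}=\mathcal{V}^2$ to be regularly decreasing \cite[Remark after Theorem 5.4 and Proposition 4.8]{BMS1982a}. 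Theorem \ref{(F)-(DF)-ub-cor} then shows $\lambda^0(A)\tensorcheck k^0(B^{-1})$ to be ultrabornological, whence the isomorphic space $\lambda^0(A)\tensorcheck\lambda^0(B)'_b$ is as well. For the second set of hypotheses, namely $\lambda^0(A)$ satisfying $(\overline{\overline{\Omega}})$ and $\lambda^0(B)$ satisfying $(\underline{\text{DN}})$, I would argue identically, replacing Theorem \ref{(F)-(DF)-ub-cor} by Remark \ref{(F)-(DF)-ub-cor-rem}.

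The only genuine subtlety, and hence the main obstacle, is the collapse $k^1(B^{-1})\cong k^0(B^{-1})$: an order-one coechelon space is in general strictly larger than the order-zero one, and the two coincide precisely under a nuclearity-type hypothesis. The point I would be careful to check is that it is exactly the same assumption (nuclearity of $\lambda^0(B)$, equivalently condition (N) for $B^{-1}$) that simultaneously furnishes this identification of the dual \emph{and} supplies the regularly decreasing condition required by Theorem \ref{(F)-(DF)-ub-cor}; everything else is a matter of matching notation and tracking the index on the coechelon space and the direction of the duality.
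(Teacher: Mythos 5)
Your proposal is correct and follows essentially the same route as the paper: the corollary is deduced from Theorem \ref{(F)-(DF)-ub-cor} and Remark \ref{(F)-(DF)-ub-cor-rem} via the identifications $\lambda^0(B)'_b\cong k^1(B^{-1})\cong k^0(B^{-1})$, with nuclearity of $\lambda^0(B)$ supplying both the collapse $k^1\cong k^0$ and (through the Grothendieck--Pietsch condition (N)) the regularly decreasing property of $B^{-1}$. This matches the discussion preceding Corollary \ref{COR-2} in the paper, which is exactly why the corollary is stated there with no further proof.
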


\noindent{}In the general case of continuous functions, an analogon of Corollary \ref{COR-1} can be proved under the additional assumption that both factors of the tensor product are Schwartz and that one of them is nuclear -- this is achieved by using the results \cite[Theorem 6 and Theorem 9]{Piszczek2010} of Piszczek (compare with Doma\'{n}ski \cite[Corollary 5.6]{Domanski2010}) on tensor products of PLS-spaces.

\begin{proposition}\label{Schwartz-Prop}Let $A$ be an increasing and $\mathcal{V}$ be a decreasing sequence of weights on $X_1$ resp.~$X_2$. Assume that the spaces $CA_0(X_1)$ and $\mathcal{V}_0C(X_2)$ are Schwartz and that one of them is nuclear. If $CA_0(X_1)$ satisfies ($\Omega$) and $\mathcal{V}_0C(X_2)'_b$ satifies (DN) or $CA_0(X_1)$ satisfies ($\overline{\overline{\Omega}}$) and $\mathcal{V}_0C(X_2)'_b$ satisfies (\underline{DN}), then the $\epsilon$-tensor product $CA_0(X_1)\tensorcheck{}\mathcal{V}_0C(X_2)$ is ultrabornological.
\end{proposition}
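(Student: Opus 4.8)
The plan is to reduce the statement to the results of Piszczek \cite{Piszczek2010} on tensor products of PLS-spaces. First I would record the structural facts about the two factors. Since $A=(a_n)_{n\in\mathbb{N}}$ is increasing, $E:=CA_0(X_1)=\proj{n}C(a_n)_0(X_1)$ is a Fr\'{e}chet space, and the Schwartz assumption turns it into an FS-space. Since $\mathcal{V}=(v_n)_{n\in\mathbb{N}}$ is decreasing, $F:=\mathcal{V}_0C(X_2)=\ind{n}C(v_n)_0(X_2)$ is an LB-space, and the Schwartz assumption turns it into an LS-space; consequently its strong dual $G:=F'_b$ is again an FS-space. Both $E$ and $F$ are therefore PLS-spaces -- an FS-space being a PLS-space with trivial inductive steps and an LS-space one with a trivial projective step -- and by hypothesis one of them is nuclear. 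Thus the standing assumptions of \cite[Theorem 6 and Theorem 9]{Piszczek2010} are met, and it remains only to verify that the conditions occurring there coincide, in this FS/LS situation, with the assumptions of the proposition.

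Next I would carry out this identification. Here the $\epsilon$-tensor product may be written as $E\tensorcheck F\cong E\tensorcheck G'_b$, and its ultrabornologicity is governed by the vanishing of $\Projeins$ of the associated tensor spectrum, i.e.~by a splitting of Vogt--Wagner type for the pair of Fr\'{e}chet spaces $G=F'_b$ and $E$. Piszczek's criterion, specialised to FS- and LS-spaces with one nuclear factor, states that $E\tensorcheck F$ is ultrabornological once the FS factor $E$ satisfies $(\Omega)$ and the FS-dual $G=F'_b$ of the LS factor satisfies (DN) (respectively, with $E$ satisfying the stronger condition $(\overline{\overline{\Omega}})$ and $G$ the weaker condition $(\underline{DN})$; compare Doma\'{n}ski \cite{Domanski2010}). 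These are precisely the two alternative hypotheses of the proposition: $CA_0(X_1)=E$ satisfies $(\Omega)$ resp.~$(\overline{\overline{\Omega}})$, while $\mathcal{V}_0C(X_2)'_b=G$ satisfies (DN) resp.~$(\underline{DN})$. Hence Piszczek's theorems yield that $CA_0(X_1)\tensorcheck\mathcal{V}_0C(X_2)$ is ultrabornological.

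The step I expect to be the main obstacle is this matching. Piszczek's conditions are formulated for general PLS-spaces and their spectra, so one must check that, when one factor degenerates to an FS-space and the other to an LS-space, the abstract conditions collapse exactly to the classical invariants $(\Omega)$ for $E$ and (DN) for $F'_b$, and that the relevant splitting is the one controlled by these invariants in the sense of Vogt and Wagner. The Schwartz property of both factors together with the nuclearity of one is exactly what makes \cite[Theorem 6 and Theorem 9]{Piszczek2010} applicable and what upgrades a Schwartz-level splitting statement to genuine ultrabornologicity; without it the distinct tensor topologies would first have to be reconciled. Finally, the two variant pairs consisting of $(\Omega)$ with (DN) and of $(\overline{\overline{\Omega}})$ with $(\underline{DN})$ are treated in complete parallel, just as in Remark \ref{DN-bar-Omega-bar-bar-remark}, so that no separate argument is needed for the second alternative.
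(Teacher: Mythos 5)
You have located the right external results (Piszczek's Theorems 6 and 9) and the right overall strategy, but the step you defer as ``the main obstacle'' is where the actual proof lives, and your description of what those theorems deliver is not quite right. Piszczek's Theorems 6 and 9 do not conclude ultrabornologicity of $E\tensorcheck F$ from ($\Omega$) for $E$ and (DN) for $F'_b$; they assert that the \emph{dual interpolation estimate} (for big resp.\ small $\theta$) is inherited by the tensor product of an FS- and an LS-space when one factor is nuclear. The argument therefore needs three ingredients you do not supply: (i) by \cite[Proposition 1]{Piszczek2010}, the hypotheses ($\Omega$) for $CA_0(X_1)$ and (DN) for $\mathcal{V}_0C(X_2)'_b$ translate into the dual interpolation estimate for big $\theta$ for each factor (resp.\ for small $\theta$ in the ($\overline{\overline{\Omega}}$)/(\underline{DN}) case); (ii) since both factors are ultrabornological PLS-spaces, their completed $\epsilon$-tensor product is again a PLS-space, which is what makes any PLS-theoretic conclusion about it available; (iii) the dual interpolation estimate for a PLS-space implies ultrabornologicity by Bonet--Doma\'{n}ski \cite[Corollary 1.2.(c)]{BoDo2007}, and it is this last result, not Piszczek's theorems, that produces the word ``ultrabornological'' in the conclusion.

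Your closing appeal to ``vanishing of $\Projeins$ of the associated tensor spectrum'' and a ``splitting of Vogt--Wagner type'' is also not the mechanism used here: no $\Projeins$ computation for a tensor spectrum is performed and no splitting theorem is invoked; the transfer happens entirely at the level of the interpolation-type invariant. Without (i)--(iii) the proposal is a statement of intent rather than a proof; with them it becomes exactly the paper's argument.
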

\begin{proof}

\noindent{}Our general assumptions imply that $CA_0(X_1)$ and $\mathcal{V}_0C(X_2)$ both enjoy the dual interpolation estimate for big $\theta$ (in the case of (DN) and (${\upOmega}$)) or small $\theta$ (in the case of (\underline{DN}) and ($\overline{\overline{{\upOmega}}}$)), cf.~\cite[Proposition 1]{Piszczek2010}. With \cite[Theorem 6 resp.~Theorem 9]{Piszczek2010} our nuclearity assumptions imply that $CA_0(X_1)\tensorcheck{}\mathcal{V}_0C(X_2)$ has the dual interpolation estimate for small or for big $\theta$. Since $CA_0(X_1)$ and $\mathcal{V}_0C(X_2)$ are ultrabornological PLS-spaces, their completed $\epsilon$-tensor product is a PLS-space, cf.~\cite[remarks at the end of Section 2]{Piszczek2010}. Thus, $CA_0(X_1)\tensorcheck{}\mathcal{V}_0C(X_2)$ is ultrabornological by Bonet, Doma\'{n}ski \cite[Corollary 1.2.(c)]{BoDo2007}.
\end{proof}

\noindent{}In view of the arguments previous to Corollary \ref{COR-1}, the sequence space case of Proposition \ref{Schwartz-Prop} implies the sequence space cases of Theorem \ref{(F)-(DF)-ub-cor} and Remark \ref{(F)-(DF)-ub-cor-rem}, if we assume that $\lambda^0(A)$ and $\lambda^0(B^{-1})$ are Schwartz and that one of them is nuclear. In fact, the assumptions of Proposition \ref{Schwartz-Prop} already imply $X_1\cong{}X_2\cong\mathbb{N}$ as the next statement shows.

\begin{remark}\label{Schwartz-discrete} Let $X$ be a Hausdorff locally compact and $\sigma$-compact space. Let $A=(a_n)_{n\in\mathbb{N}}$ be an increasing sequence of weights on $X$ and let $CA_0(X)=\proj{n}C(a_n)_0(X)$ be Schwartz. Then $X\cong\mathbb{N}$, where $\mathbb{N}$ is endowed with the discrete topology. The same conclusion is valid if we replace $CA_0(X)$ by $\mathcal{V}_0C(X)=\ind{n}C(v_n)_0(X)$ for a decreasing sequence $\mathcal{V}=(v_n)_{n\in\mathbb{N}}$ of weights.
\end{remark}
\begin{proof} Let $K\subseteq X$ be compact and consider the restriction map $q\colon CA_0(X)\rightarrow C(K)$. With the definition $C:=\sup_{x\in K}1/a_1(x)$ we have $a_1C\geqslant1$ on $K$ and thus $\sup_{x\in K}|f(x)|\leqslant C\sup_{x\in K}a_1(x)|f(x)|\leqslant{}C\sup_{x\in X}a_1(x)|f(x)|$ for each $f\in CA_0(X)$, i.e.~$q$ is continuous if we endow $C(K)$ with the sup-norm. Hence, $q$ induces an isomorphism $CA_0(X)/\ker{}q\cong{}C(K)$ and thus $C(K)$ is a Schwartz space by \cite[24.18]{MeiseVogtEnglisch}. But $C(K)$ is also a Banach space, whence it is finite dimensional which in turn implies that $K$ is finite (for infinite $K$ it is not hard to construct an infinite set of continuous functions which is linearly independent); in particular, $K$ has to carry the discrete topology. Now the conclusion follows since we assumed that $X$ is $\sigma$-compact. In the case of $\mathcal{V}_0C(X)$ we may use the same arguments; the continuity of the restriction maps follows from the universal property of the inductive limit (e.
g.~\cite[24.7]{MeiseVogtEnglisch}).
\end{proof}

\noindent{}To conclude, let us point out that Theorem \ref{(F)-(DF)-ub-cor} and Remark \ref{(F)-(DF)-ub-cor-rem} clearly are more general in the case of weighted spaces of continuous functions whereas the results of Piszczek and Doma\'{n}ski which leaded us to Proposition \ref{Schwartz-Prop} provide a general theory for (nuclear) PLS-space.\medskip

\bigskip

\footnotesize

{\sc Acknowledgements. }This article arises from a part of the author's doctoral thesis which was supervised by Klaus D.~Bierstedt and Jos\'{e} Bonet. The author thanks both of them for all their advice, helpful remarks and valuable suggestions. In addition, the authour likes to thank the referee for the correction of an error in Lemma \ref{Frechet characterization lemma 2}, for pointing out the relation of this work to the recent papers of Piszczek and Doma\'{n}ski and for his comments on the authors original proof of Proposition \ref{Fundamental Lemma} (which was inspired by that of Vogt \cite[Theorem 5.1]{Vogt1987} and involved slightly different but equivalent versions of $\text{(DN)}_{\text{w}}$ and $\text{(}\upOmega\text{)}_{\text{w}}$), which leaded to a simplification of the presented results and in particular of the forementioned proof.

\normalsize

\end{document}